\documentclass[12pt]{amsart}

\usepackage[pdfauthor   = {Mohamed\ Barakat\ and\ Lukas\ Kuehne},
            pdftitle    = {Computing\ the\ nonfree\ locus\ of\ the\ moduli\ space\ of\ arrangements\ and\ Terao's\ freeness\ conjecture},
            pdfsubject  = {},
            pdfkeywords = {rank 3 simple matroids;\ integrally splitting characteristic polynomial;\ Terao's freeness conjecture;\ noSQL database;\ ArangoDB},
            bookmarks=true,
            bookmarksopen=true,
            pagebackref=true,
            hyperindex=true,
            colorlinks=true,
            linkcolor=blue,
            citecolor=blue,
            filecolor=blue,
            urlcolor=blue,
            ]{hyperref}

\usepackage[utf8]{inputenc}
\usepackage[english]{babel}
\usepackage[T1]{fontenc}
\usepackage{geometry}                
\geometry{a4paper}                   
\geometry{
  includeheadfoot,
  margin=2.81cm,
  bottom=1cm,
}
\setlength{\marginparwidth}{2cm}

\usepackage{times}
\usepackage{mathrsfs}
\usepackage{mathtools}
\usepackage{latexsym}
\usepackage{amssymb}
\usepackage{amsthm}
\usepackage{epsfig}
\usepackage{colortbl}
\usepackage[all]{xy}
\usepackage{fancyvrb}
\usepackage{graphicx}
\usepackage[font=small,labelfont=bf]{caption}
\usepackage[dvipsnames]{xcolor}
\usepackage{accents} 
\usepackage{enumerate}

\usepackage{wrapfig}
\usepackage{tikz}
\usetikzlibrary{shapes,arrows,matrix,backgrounds,positioning,plotmarks,calc,patterns,
decorations.shapes,
decorations.fractals,
decorations.markings,
decorations.pathreplacing,
decorations.pathmorphing,
decorations.text
}
\usepackage[colorinlistoftodos,shadow]{todonotes}

\usepackage{bm}
\usepackage{multirow}
\usepackage{mdwlist}
\usepackage{stmaryrd}
\usepackage{mathdots} 

\usepackage[toc,page]{appendix}
\usepackage{float}

\usepackage{bigfoot}

\usepackage[sort&compress,capitalise]{cleveref}

\crefname{exmp}{Example}{Examples}

\usepackage[linesnumbered,commentsnumbered,ruled,vlined]{algorithm2e}

\SetCommentSty{mycommfont}

\newtheoremstyle{mytheoremstyle} 
    {5pt}                    
    {5pt}                    
    {\itshape}                   
    {\parindent}                           
    {\bf}                   
    {.}                          
    {.5em}                       
    {}  

\theoremstyle{mytheoremstyle}

\newtheorem{theorem}{Theorem}[section]

\newtheorem{conj}[theorem]{Conjecture}

\newtheorem{prop}[theorem]{Proposition}
\newtheorem{coro}[theorem]{Corollary}

\newtheoremstyle{mytdefintionstyle} 
    {5pt}                    
    {5pt}                    
    {\rm}                   
    {\parindent}                           
    {\bf}                   
    {.}                          
    {.5em}                       
    {}  

\theoremstyle{remark}
\newtheorem{rmrk}[theorem]{Remark}

\theoremstyle{mytdefintionstyle}
\newtheorem{defn}[theorem]{Definition}
\newtheorem{exmp}[theorem]{Example}

\newtheoremstyle{exmp_contd}
    {5pt}                    
    {5pt}                    
    {\rm}                   
    {\parindent}                           
    {\bf}                   
    {.}                          
    {.5em}                       
    {\thmname{#1}\ \thmnumber{ #2}\thmnote{#3}\ (continued)}  
\theoremstyle{exmp_contd}




\newcommand{\Am}{(\A,m)}

\DeclareMathOperator{\coker}{coker}

\DeclareMathOperator{\Hom}{Hom}

\DeclareMathOperator{\Spec}{Spec}

\newcommand\p{\mathfrak{p}}

\renewcommand\AA{\mathbb{A}}
\newcommand\Slice{\Sigma}

\newcommand\A{\mathcal{A}}

\newcommand{\Q}{\mathbb{Q}}
\newcommand{\CC}{\mathbb{C}}

\newcommand{\Z}{\mathbb{Z}}
\newcommand\N{\mathbb{N}}
\renewcommand\phi{\varphi}

\newcommand{\RR}{\mathbb{R}}

\DeclareMathOperator\ch{char}

\DeclareMathOperator{\Der}{Der}

\definecolor{darkgray}{rgb}{0.3,0.3,0.3}
\definecolor{LightGray}{gray}{0.9}

\pgfarrowsdeclarecombine[\pgflinewidth]
  {doublestealth}{doublestealth}{stealth'}{stealth'}{stealth'}{stealth'}

\setcounter{MaxMatrixCols}{20}

\definecolor{darkgreen}{rgb}{0.008,0.617,0.067}
\definecolor{brown}{rgb}{0.6,0.4,0.2}




\newif\ifjournalversion

\author[M. Barakat]{Mohamed Barakat}
\address{Department of mathematics, University of Siegen, 57068 Siegen, Germany}
\email{\href{mailto:Mohamed Barakat <mohamed.barakat@uni-siegen.de>}{mohamed.barakat@uni-siegen.de}}

\author[L. K\"uhne]{Lukas K\"uhne}
\address{Max Planck Institute for Mathematics in the Sciences, Inselstr. 22, 04103, Leipzig, Germany}
\address{Fakult\"at f\"ur Mathematik, Universit\"at Bielefeld, Bielefeld, Germany}
\email{\href{mailto:Lukas Kuehne<lukas.kuehne@math.uni-bielefeld.de>}{lukas.kuehne@math.uni-bielefeld.de}}

\begin{document}

\title[Computing the nonfree locus and Terao's freeness conjecture]{Computing the nonfree locus of the moduli space of arrangements and Terao's freeness conjecture}
\begin{abstract}
  In this paper, we show how to compute using Fitting ideals the nonfree locus of the moduli space of arrangements of a rank $3$ simple matroid, i.e., the subset of all points of the moduli space which parametrize nonfree arrangements.
  Our approach relies on the so-called Ziegler restriction and Yoshinaga's freeness criterion for multiarrangements.
  We use these computations to verify Terao's freeness conjecture for rank $3$ central arrangements with up to $14$ hyperplanes in any characteristic.
\end{abstract}

\thanks{An extended abstract of this paper appeared in the \href{https://doi.org/10.14760/OWR-2021-5}{Oberwolfach workshop report 5/2021} and in the \href{https://fachgruppe-computeralgebra.de/data/CA-Rundbrief/car68.pdf}{Computeralgebra Rundbrief Ausgabe 68.}}

\keywords{%
rank $3$ simple matroids,
integrally splitting characteristic polynomial,
Terao's freeness conjecture,
noSQL database,
ArangoDB%
}
\subjclass[2010]{%
05B35,
52C35,
32S22,
68R05,
68W10%
}
\maketitle


\section{Introduction}

A (central) arrangement of hyperplanes $\A = \{ H_1, \ldots, H_n \}$ is a finite collection of hyperplanes in a vector space $V$ of dimension $r$ over a field $k$ containing the origin.
Denote by $S$ the polynomial ring $k[x_1,\ldots,x_{r}]$ where $r = \dim V$.
For each hyperplane $H \in \A$ we can fix a linear polynomial $\alpha_H \in S$ such that $H = V(\alpha_H)$ is the vanishing locus of $\alpha_H$.
Then $\A = V(\prod_{H \in \A} \alpha_H)$.

One of the most studied algebro-geometric invariants of $\A$ is the graded $S$-module of \textbf{logarithmic derivations} $D(\A)$ defined as
\[D(\A) \coloneqq \left\{ \theta\in \Der(S) \middle|\, \theta(\alpha_H) \in \alpha_H S \mbox{ for all }H\in \A \right\} \mbox{,}
\]
where $\Der(S) = \left\{ \theta = \sum_{i = 1}^r \theta_i \frac{\partial}{\partial x_i} \middle|\, \theta_i \in S \right\} \cong S^r$ is the module of all derivations on $S$.
If~$D(\A)$ is a free $S$-module, $\A$ is called a \textbf{free arrangement}. 

\begin{conj}[Terao's freeness conjecture]
	The freeness of an arrangement $\A$ defined over a field $k$ only depends on the characteristic of the field and the intersection lattice $L(\A)$ of hyperplanes, which is isomorphic to the lattice of flats of the underlying matroid.
\end{conj}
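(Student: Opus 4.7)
Terao's freeness conjecture is a longstanding open problem, so a full proof is not to be expected here; the realistic target, and the one matching the paper's title and abstract, is a \emph{computational verification} of the conjecture within a bounded range of parameters, namely rank $3$ central arrangements with at most $14$ hyperplanes. My plan is therefore to reduce the conjecture in this range to a finite computation on moduli spaces indexed by rank $3$ simple matroids, and to carry out that computation using the nonfree locus machinery alluded to in the abstract.

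First, I would enumerate all rank $3$ simple matroids $M$ on $n \leq 14$ elements; each such $M$ has an associated (coarse) moduli space $\mathcal{M}(M)$ whose $k$-points, for a field $k$, parametrize, up to the natural $\mathrm{PGL}$-action, the central hyperplane arrangements $\A$ in some $V \cong k^{3}$ whose underlying matroid is $M$. The conjecture for $M$ over a field of characteristic $p$ asserts that freeness of $D(\A)$ is a constant function on $\mathcal{M}(M)(\overline{\mathbb{F}_p})$ (and on $\mathcal{M}(M)(\mathbb{C})$ in characteristic $0$). Thus it suffices to compute, for every such $M$ and every relevant characteristic, the \emph{nonfree locus} $\mathcal{N}(M) \subseteq \mathcal{M}(M)$, and verify that $\mathcal{N}(M)$ is either empty or all of $\mathcal{M}(M)$.

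The second step is to make $\mathcal{N}(M)$ computable. Over a universal family of arrangements on $\mathcal{M}(M)$, one forms the Ziegler restriction to a generic hyperplane, obtaining a multiarrangement whose freeness, by Yoshinaga's criterion, detects freeness of $\A$ itself in rank $3$. The non-freeness of this multiarrangement is then cut out by the vanishing of a suitable minor/ideal of a presentation matrix of its module of logarithmic derivations, i.e.\ by a \emph{Fitting ideal} supported on $\mathcal{M}(M)$. Computing this Fitting ideal intrinsically over $\mathbb{Z}$ (or over the coordinate ring of $\mathcal{M}(M)$ with $p$ inverted or specialized) yields an explicit scheme-theoretic description of $\mathcal{N}(M)$, and checking whether it is empty or the whole space is a Gr\"obner basis / radical computation.

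The final step is the bookkeeping: iterate over the full matroid catalogue for $n \leq 14$, run the Fitting-ideal test in each arithmetic setting (the relevant characteristics are those dividing orders of torsion in the universal realization space, so only finitely many primes matter per matroid), and record any failure — that is, any $M$ and $p$ for which $\mathcal{N}(M)$ is a proper nonempty subscheme of $\mathcal{M}(M)$; absence of such a failure across the database constitutes the verification. I expect the main obstacle to be not conceptual but one of scale and of degeneracies: the number of rank $3$ matroids up to $n=14$ is very large, the Fitting-ideal computations explode in generic coordinates, and care is needed to handle matroids whose realization space is empty or reducible, to separate irreducible components lying in different characteristics, and to keep the intermediate data structured (hence the paper's use of a noSQL database), so that the certificate of verification is reproducible rather than a single opaque run.
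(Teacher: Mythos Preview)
Your overall strategy---parametrize arrangements by matroid moduli, apply Ziegler restriction and Yoshinaga, cut out the nonfree locus by a Fitting ideal, and check that it is empty or full in each characteristic---is the paper's strategy. There are, however, two genuine gaps.

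First, your invocation of Yoshinaga's criterion is wrong in a way that would derail the computation. In rank $3$ the Ziegler restriction $(\A^H,m^H)$ is a rank $2$ multiarrangement and is \emph{always} free; its freeness therefore carries no information about $\A$. What Yoshinaga's criterion actually says (\Cref{thm:yoshinaga}, \Cref{coro:yoshinaga}) is that $\A$ is free iff the exponents of $(\A^H,m^H)$ equal $(d_2,d_3)$, equivalently iff $D(\A^H,m^H)_{d_2-1}=0$. Consequently the relevant Fitting ideal is not that of a presentation of $D(\A^H,m^H)$ but of the degree-$(d_2-1)$ part $\phi^M$ of the morphism $\psi^{(\A^H,m^H)}$ of free graded modules whose kernel is $D(\A^H,m^H)$ (\Cref{defn:phi}, \Cref{thm:main}). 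Implemented as you describe, the test would return the empty nonfree locus for every matroid.

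Second, you propose to run the test over the full catalogue of rank $3$ simple matroids with $n\leq 14$. Even after restricting to integrally splitting characteristic polynomial (a necessary condition for freeness that you do not mention), there are $815{,}107$ such matroids. The paper relies on the prior filtering from \cite{BBJKL}---discarding nonrepresentable matroids, essentially uniquely representable ones (for which the conjecture is vacuous), divisionally free ones, and unbalanced ones---to reduce to exactly $9$ exceptional matroids before any Fitting-ideal computation is attempted. Without this reduction the plan is not feasible; with it, the nine cases are handled one by one over explicit low-dimensional representation slices $\Slice\subset\Spec\Z[a]$ (or $\Spec\Z[a_1,a_2]$), and the nonfree locus turns out to be either $\emptyset$, all of $\Slice$, or a union of fibers over finitely many primes.
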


Recently, Dimca, Ibadula, and Macinic confirmed Terao's conjecture for arrangements in~$\CC^3$ with up to 13 hyperplanes \cite{DIM19}.
In joint work with Behrends, Jefferson, and Leuner, we confirmed Terao's conjecture for rank $3$ arrangements with exactly $14$ hyperplanes in arbitrary characteristic~\cite{BBJKL}.

The main application of the tools we develop in this paper is a common generalization of the two aforementioned results:
\begin{theorem}\label{thm:terao}
	Terao's freeness conjecture is true for rank $3$ arrangements with up to $14$ hyperplanes in any characteristic.
\end{theorem}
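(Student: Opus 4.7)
The plan is to reduce Terao's conjecture, for this finite range of parameters, to a finite computation on each matroid stratum of the moduli space of arrangements and then to carry that computation out. Concretely, Terao's conjecture asserts that, for a fixed rank $3$ simple matroid $M$ and fixed characteristic $p$, the freeness of a realization $\A$ of $M$ over a field of characteristic $p$ depends only on $M$ and $p$. Equivalently, inside the moduli space $\mathcal{M}(M)$ parameterizing realizations of $M$, the nonfree locus $\mathcal{N}(M) \subseteq \mathcal{M}(M)$ should either equal all of $\mathcal{M}(M)$ or be empty, in every characteristic.

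First I would assemble the input data: enumerate all rank $3$ simple matroids on $n \le 14$ elements (the relevant enumeration already exists in the literature and is used in \cite{BBJKL}), discarding those that are disconnected or that split off a coloop, since freeness for those reduces to smaller cases. For each remaining matroid $M$, I would construct the affine coordinate ring of the moduli space $\mathcal{M}(M)$ as a quotient of a polynomial ring over $\Z$, so that all results hold uniformly in the characteristic. The output of this first step is, for each $M$, an explicit $\Z$-scheme whose $k$-points parameterize realizations of $M$ over $k$.

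Next, for each such moduli space I would compute the nonfree locus $\mathcal{N}(M)$ using the main technical contribution of the paper: the Ziegler restriction $\A^H$ to a generic hyperplane produces a multiarrangement whose freeness, by Yoshinaga's criterion, detects freeness of $\A$, and the obstruction to freeness of this multiarrangement is captured by an explicit Fitting ideal of a presentation matrix of $D(\A^H)$ over the coordinate ring of $\mathcal{M}(M)$. Saturating/localizing appropriately and reducing modulo each prime $p$ (including $p = 0$) would produce, for every $(M,p)$, the ideal cutting out $\mathcal{N}(M)_p$ inside $\mathcal{M}(M)_p$. The verification of Theorem~\ref{thm:terao} then reduces to the bookkeeping assertion that for every pair $(M,p)$ in our list, this ideal either defines the whole scheme $\mathcal{M}(M)_p$ or defines the empty subscheme.

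The main obstacle is computational rather than conceptual: the number of matroids grows rapidly and the Fitting-ideal and saturation computations live in polynomial rings in many variables, so both the storage of intermediate results and the primary decomposition needed to certify ``all or nothing'' in every characteristic are heavy. I would therefore carry out the computation in a structured way, storing each matroid together with its moduli space, the Fitting-ideal witness, and the per-characteristic verdict in a database (as the authors advertise via ArangoDB), and exploit matroid automorphisms and product/parallel-connection reductions to shrink the catalog of cases that actually require a fresh Gr\"obner-basis computation. Combined with the previously established cases $n \le 13$ over $\CC$ \cite{DIM19} and $n = 14$ in arbitrary characteristic \cite{BBJKL}, the remaining cases $n \le 13$ in positive characteristic, handled by the algorithm above, complete the proof of Theorem~\ref{thm:terao}.
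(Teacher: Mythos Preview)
Your overall strategy---compute the nonfree locus of each matroid stratum via the Fitting ideal of the Ziegler restriction and invoke Yoshinaga's criterion, then check that this locus is all-or-nothing in every characteristic---is exactly the paper's approach, so conceptually you are on target.

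What you miss is the specific reduction that makes the computation feasible. You propose to run the Fitting-ideal machinery over essentially the full catalogue of rank~$3$ matroids on $\le 14$ points, trimmed only by coloop/disconnectedness and parallel-connection heuristics; you then flag the resulting Gr\"obner and primary-decomposition load as the main obstacle. The paper instead applies four sharp filters already established in \cite{BBJKL}: discard matroids that are (i) not representable, (ii) essentially uniquely representable (Terao is trivially true there), (iii) divisionally free, or (iv) unbalanced. After these filters only \emph{nine} matroids survive (sizes $9,11,12,12,13,13,13,13,13$), and the Fitting-ideal computation on each of their representation slices is small---indeed the relevant matrices reduce to at most $9\times 2$. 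No primary decomposition and no prime-by-prime reduction is needed: the nonfree locus is cut out by a single Fitting ideal over $\Z$, and one simply compares it to the slice. Your outline would eventually succeed, but the paper's proof is not a brute-force database sweep; it is nine targeted computations after a combinatorial sieve you do not name.
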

In \Cref{sec:matroids} we use our database developed in \cite{BBJKL} to show that there are $9$ matroids left to consider for the proof of \Cref{thm:terao}, which we defer to \Cref{sec:proof}.
In \Cref{sec:database} we list the database keys of these $9$ exceptional cases.

We now describe the main tools we develop in this paper.
Let $\mathcal{R}(M)$ denote the space of all matrix representations of a matroid $M$ (over an arbitrary field) (cf.~\Cref{sec:RM}).
Since $\mathcal{R}(M)$ is too big to compute with, we consider closed subsets $\Slice \subseteq \mathcal{R}(M)$ which
\begin{itemize}
  \item have small dimension compared to $\mathcal{R}(M)$,
  \item but still contain a representative of each equivalence class of arrangements representing $M$.
\end{itemize}
In \Cref{sec:slice} we call any such $\Slice$ a \textbf{representation slice} of the matroid $M$.
In \Cref{sec:computing_slice} we show how to compute a representation slice $\Slice$.

For rank $3$ simple matroids, we then consider in \Cref{sec:NFL} the so-called \textbf{nonfree locus} $\operatorname{NFL}_\Slice(M)$ of a representation slice $\Slice$, which corresponds to the subset of nonfree arrangements in $\Sigma$.
We prove in \Cref{thm:main} that $\operatorname{NFL}_\Slice(M)$ is the vanishing locus of a Fitting ideal of a specific matrix, which relies on the embedding of $\Sigma \subseteq \mathcal{R}(M)$.
We construct this matrix in several steps.
First we describe in \Cref{coro:psi} the graded module of logarithmic derivations of a multiarrangement as the kernel of a morphism between \emph{free} graded modules.
The specific multiarrangement we are interested in is the so-called Ziegler restriction which we briefly recall in \Cref{sec:Ziegler}.
There we also recall Yoshinaga's criterion, which states that the nonfree locus can be detected in a specific degree of the above mentioned morphism of free graded modules.
We therefore describe in \Cref{sec:homogeneous_part} how to compute homogeneous parts of morphisms of free graded modules.

The feasibility of the computation of this Fitting ideal relies on two techniques:
\begin{itemize}
  \item An embedding of $\Slice$ in a smaller affine space as a quasi-affine set which we describe in \Cref{sec:smaller_embedding}.
  \item In \Cref{sec:Fitting} we recall that the Fitting ideal of a matrix is an invariant of its cokernel module.
    Moreover in \Cref{sec:ByASmallerPresentation} we describe several heuristics how to find a smaller presentation matrix for this module.
\end{itemize}

We end the paper with further examples and a computationally motivated conjecture in \Cref{sec:conj}.

\section*{Acknowledgments}

We thank Takuro Abe for inspiring discussions.

\section{Matroids and their representation spaces}\label{sec:representations}

This section is an elaboration of \cite[Section~4]{BBJKL}.

\begin{defn}
  A \textbf{matroid} $M$ is a pair $(E,\mathcal B)$, where $E$ is a finite \textbf{ground set} and $\mathcal B$ is a nonempty set of subsets of $E$, called \textbf{bases}, such that for any two bases $B,B'\in \mathcal{B}$ with $i\in B\setminus B'$ there exists $j\in B'$ with $B\setminus\{i\}\cup\{j\}\in \mathcal{B}$.
  
  The \textbf{size} of $M$ is the size of the ground set $E$ and the common size of all bases is called the \textbf{rank} of $M$.
  The matroid $M$ is called \textbf{simple} if each pair of elements of $E$ is contained in at least one basis.
\end{defn}

Let $M = (\{1, \ldots, n\}, \mathcal{B})$ be a simple rank $r$ matroid.
A \textbf{representation} of $M$ over the field $k$ is a matrix $P \in k^{r \times n}$ such that
\begin{align} \label{eq:representation}
  \det P_B \neq 0 \iff B \in \mathcal{B} \mbox{,}
\end{align}
where $P_B$ is the $r \times r$-submatrix consisting of the columns index by $B$.
The kernels of the linear forms given by the columns of $A$ define an arrangement $\A$ of $n$ hyperplanes in $k^r$ with an intersection lattice isomorphic to the lattice of flats of the matroid $M$.

\subsection{The space of matrix representations of a matroid} \label{sec:RM}

Condition \eqref{eq:representation} defines an ideal $I'$ in the ring $A' = A[d]$, where
\[
  A = \Z[p_{ij} \mid i = 1, \ldots, r, j = 1, \ldots, n ]
\]
given by
\begin{align} \label{eq:I'} \tag{$I'$}
  I' = \left\langle \det(P_N) \mid N \subseteq E \mbox{ not a basis}, |N|=r  \right\rangle + \left\langle 1 -d \prod_{B \in \mathcal{B}(M)} \det(P_B) \right\rangle  \unlhd A[d] \mbox{,}
\end{align}
where $P = (p_{ij}) \in A^{r \times n}$.

The (possibly empty) \textbf{space of matrix representations}\footnote{over some unspecified field $k$} of a matroid $M$ is an \emph{affine variety}, namely the vanishing locus
\begin{align} \label{eq:affine}
  \mathcal{R}(M) \coloneqq V(I') \subseteq \AA^{rn + 1} = \Spec A[d] \twoheadrightarrow \Spec \Z \mbox{.}
\end{align}

The matroid $M$ is representable (over some field $k$) if and only if $1 \notin I'$, which is equivalent to the reduced Gröbner basis (over $\Z$) of $I'$ being equal to $\{1\}$.
This is basically the algorithm suggested in \cite{Oxley2011}.

\subsection{A quasi-affine embedding of the space of matrix representations}
However, it is computationally more efficient to represent $\mathcal{R}(M)$ as a
 locally closed set
\begin{align} \label{eq:quasi-affine}
  \mathcal{R}(M) \cong V(\widetilde{I}) \setminus V(\widetilde{J}) \subseteq \AA^{rn} = \Spec A \twoheadrightarrow \Spec \Z \mbox{,}
\end{align}
where
\begin{align}
  \widetilde{I} &= \sum \left\langle \det P_N \mid N \subseteq E \mbox{ not a basis}, |N|=r  \right\rangle, \label{eq:I_tilde} \tag{$\widetilde{I}$} \\
  \widetilde{J} &= \prod \langle \det P_B \mid B \in \mathcal{B}(M) \rangle \label{eq:J_tilde} \tag{$\widetilde{J}$} \mbox{.}
\end{align}
In particular, $\widetilde{J}$ is a principal ideal\footnote{One can recover the original affine description of $\mathcal{R}(M)$ from the latter quasi-affine description by passing to the Rabinowitsch cover \cite[Example 6.1]{BL_Chevalley}}.
It follows that
\begin{align*}
  \mathcal{R}(M) &\cong V(\widetilde{I}) \setminus V(\widetilde{J})
\end{align*}
and $M$ is representable (over some field $k$) if and only if $\det(P_B) \notin \sqrt{\widetilde{I}}$ for all $B \in \mathcal{B}(M)$.
It is well-known that the radical ideal membership problem can be replaced by a saturated ideal membership problem, which we will utilize towards the end of this section.

\subsection{A representation slices of a matroid} \label{sec:slice}

The algebraic group $\operatorname{GL}_r \times (\operatorname{GL}_1 \wr\, S_n)$ acts on $\AA^{rn}$ by
\[
  (g, ((\lambda_1, \ldots, \lambda_n), \pi)) P = g P h^{-1} \mbox{,}
\]
where $g \in \operatorname{GL}_r$ and $h = (h_{ij}) = (\delta_{i,\pi(j)} \lambda_j)$ is the monomial matrix of
\[
  ((\lambda_1, \ldots, \lambda_n), \pi) \in \operatorname{GL}_1 \wr\, S_n \coloneqq (\operatorname{GL}_1 \times \cdots \operatorname{GL}_1) \rtimes S_n \mbox{.}
\]
This action induces an action on the invariant subscheme $\mathcal{R}(M)$, the orbits of which are the equivalence classes of matrix representations of the matroid $M$.
The moduli space of representations of $M$ is thus the global quotient stack
\[
  \mathcal{M}(M) \coloneqq \mathcal{R}(M) / \left( \operatorname{GL}_r \times (\operatorname{GL}_1 \wr\, S_n) \right) \mbox{.}
\]
Instead of constructing the moduli space $\mathcal{M}(M)$ it suffices to consider a closed subset
\begin{align*}
  \Slice \subseteq \mathcal{R}(M)
\end{align*}
which intersects each orbit of the action of $\operatorname{GL}_r \times (\operatorname{GL}_1 \wr\, S_n)$ on $\mathcal{R}(M)$, i.e., where the projection $\pi:\mathcal{R}(M) \twoheadrightarrow \mathcal{M}(M)$ still restricts to a projection $\pi_{|\Slice}: \Slice \twoheadrightarrow \mathcal{M}(M)$.
We call any such $\Slice$ a \textbf{representation slice}\footnote{We do not call it a representation section since we do not require it to intersect each orbit in exactly one point. However, in many instances the representation slice we found is indeed a representation section.} of the matroid $M$.

\subsection{Computing a representation slice} \label{sec:computing_slice}

Representation slices that can be embedded in affine spaces of smaller dimension are computationally favorable.
In the ideal case the dimension of $\Slice$ should be equal to the dimension of the moduli space $\mathcal{M}(M)$.
This happens when $\Slice$ intersects each orbit in finitely many points.

We construct such a slice by fixing certain values of the matrix $P$ to $0$ or $1$ as described in~\cite[p. 184]{Oxley2011}:
Firstly, we choose a basis $B\in \mathcal{B}(M)$ and fix the corresponding submatrix $P_B$ to be the unit matrix.
Without loss of generality we can assume $B=\{1,\dots,r\}$.
Secondly, we consider the fundamental circuits with respect to this basis~$B$, i.e., for each $i \in E \setminus B$ let $C(i,B)$ be the unique circuit of the matroid $M$ contained in $B\cup \{i\}$.
The entries of $P$ in the column $i \in E \setminus B$ which do not appear in $C(i,B)$ can be fixed to $0$.
Lastly, the first nonzero entry in every column and the first nonzero entry in every row of $P$ can be taken as $1$ by column and row scaling respectively.
We have added this algorithm to $\mathtt{alcove}$ \cite{alcove}.
This amounts to adding elements of the form $p_{ij}$ or $p_{i'j'} - 1$ to the ideal $\widetilde{I}$ defined in \eqref{eq:I_tilde} yielding the larger ideal
\begin{align} \label{eq:I_Sigma} \tag{$I^\Slice$}
  I^\Slice \unlhd R
\end{align}
with
\begin{align*}
   \Slice = V(I^\Slice) \setminus V(\widetilde{J}) = V(I^\Slice) \setminus V(I^\Slice+\widetilde{J}) \mbox{,}
\end{align*}
where $\widetilde{J}$ is the ideal defined in \eqref{eq:J_tilde}.

The ideal $I^\Slice$ can be replaced by the saturation
\begin{align} \label{eq:I} \tag{$I$}
I \coloneqq I^\Slice : \widetilde{J}^\infty = I^\Slice : \det(P_{B_1})^\infty : \cdots : \det(P_{B_b})^\infty \mbox{,}
\end{align}
when $\mathcal{B}(M) = \{B_1, \ldots, B_b\}$.
Likewise, the ideal $\widetilde{J}$ can be replaced by the ideal
\begin{align}
  J \coloneqq \left\langle \operatorname{NF}_{GB(I)}(\det P_B) \mid B \in \mathcal{B}(M) \right\rangle \mbox{,} \label{eq:J} \tag{$J$}
\end{align}
where $\operatorname{NF}_{GB(I)}(f)$ is the normal form of the polynomial $f \in R$ with respect to the Gröbner basis $GB(I)$ of the ideal $I$.
It follows that
\begin{align} \label{eq:Sigma} \tag{$\Slice$}
  \Slice &= V(I) \setminus V(J) \mbox{.}
\end{align}
and $M$ is representable (over some field $k$) if and only if $1 \notin I$, which is again equivalent to the reduced Gröbner basis (over $\Z$) of $I$ being equal to $\{1\}$.
For the Gröbner basis computations over $\Z$ we used \textsc{Singular} \cite{Singular412} from within the GAP package $\mathtt{ZariskiFrames}$ \cite{ZariskiFrames}, which is part of the $\mathtt{CAP/homalg}$ project \cite{homalg-project,BL,GPSSyntax}.

The final step is to embed the quasi-affine $\Slice \subseteq \mathcal{R}(M) \subseteq \AA^{rn}$ into an affine space of smaller dimension, which we explain in \Cref{sec:smaller_embedding}.

\subsection{The nonfree locus of a representation slice} \label{sec:NFL}

Each point $\p \in \mathcal{R}(M) \cong \Spec A[d] / I'$ corresponds to a rank $r$ arrangement $\A_{(\p)}$ over the residue class field $\kappa(\p) \coloneqq \operatorname{Frac}(A[d] / \p)$ and with an intersection lattice of hyperplanes isomorphic to the lattice of flats of the matroid~$M$.
This allows us to define:

\begin{defn}
The \textbf{nonfree locus} of a closed subset $\Slice \subseteq \mathcal{R}(M)$ is the subset
\[
  \operatorname{NFL}_\Slice(M) \coloneqq \{ \p \in \Slice \mid D(\A_{(\p)}) \mbox{ is nonfree} \} \subseteq \Slice \mbox{.}
\]
\end{defn}

Using any representation slice $\Slice \subseteq \mathcal{R}(M)$ we define the \textbf{nonfree locus of the moduli space} $\mathcal{M}(M)$ as the image
\[
  \operatorname{NFL}(M) \coloneqq \pi_{|\Slice}\left(\operatorname{NFL}_\Slice(M) \right) \mbox{.}
\]
For the purpose of this paper it suffices to work with $\operatorname{NFL}_\Slice(M)$ since $\pi_\Slice$, which we do not need to construct, is surjective onto $\operatorname{NFL}(M)$.

We will show in our main \Cref{thm:main} how to compute $\operatorname{NFL}_\Slice(M) \subseteq \Slice$ for rank $3$ matroids.
In particular, we will see that the nonfree locus $\operatorname{NFL}_\Slice(M)$ is a \emph{closed} subset of $\Slice$ in this case.

\section{The $9$ remaining matroids}\label{sec:matroids}

In~\cite{BBJKL}, we generated all $815\,107$ simple rank $3$ matroids with up to $14$ elements with integrally splitting characteristic polynomial and stored them in a public database~\cite{matroids_split}.
To investigate Terao's freeness conjecture it suffices to consider the matroids that are
\begin{itemize}
	\item representable over some field,
	\item not essentially\footnote{either uniquely representable or uniquely representable in a single characteristic up to Galois isomorphism.} uniquely representable,
	\item not divisionally free, and
	\item are not unbalanced.
\end{itemize}
See~\cite{BBJKL} for the detailed definitions of these properties.
Somewhat surprisingly, it turns out that there are only $9$ rank $3$ integrally splitting matroids of size up to $14$ satisfying all of these conditions.
 There is one matroid of size $9$, one of size $11$, two of size $12$, and five of size $13$.
 This already verifies Terao's freeness conjecture for rank $3$ arrangements with precisely $14$ hyperplanes.

To deduce the more general~\Cref{thm:terao}, we will subsequently investigate the freeness of these $9$ exceptional matroids in detail.
We start by first describing the matroids in the rest of this section.

Additional details of these nine matroids are shown in~\Cref{tbl:matroids}.
For further inspection of the properties of these matroids the reader is invited to retrieve them from our public database~\cite{matroids_split} using the keys shown~\Cref{sec:database}.

\subsection{The matroid $M_9$}\label{sec:matroid_9}
The matroid $M_9$ has size $9$ and is known as the \emph{affine geometry} $AG(2,3)$ (see for instance~\cite[Example 6.2.2]{Oxley2011}).
A representation of $M_9$ over $\Q(\zeta_3)$ is the reflection arrangement of the complex reflection group $G(3,3,3)$.
As such, it has the defining equation \[(x^3-y^3)(x^3-z^3)(y^3-z^3).\]

Based on this matroid, Ziegler already demonstrated that it is crucial to formulate Terao's freeness conjecture for a fixed field:
An arrangement $\A$ over a field $k$ with underlying matroid $M_9$ is free if and only if $\ch k\neq 3$~\cite{Zie90}.

The matroid $M_9$ has the representation slice
\[
  \Sigma_{M_9} = V(a^2-a+1)\subseteq \Spec \Z[a]
\]
which parametrizes all representations of $M_{9}$ given by the matrix:
\begin{scriptsize}\[
	\begin{pmatrix*}[r]
	1 & 0 & 1 & 0 & 1 & 0       & 1       & 1         & 1	 \\
	0 & 1 & 1 & 0 & 0 & 1       & -a+1 & -a+1   & 1   \\
	0 & 0 & 0 & 1 & 1 & -a+1 & a       & 1         & a  
	\end{pmatrix*}\mbox{.}
	\]
\end{scriptsize}
\subsection{The matroid $M_{11}$}\label{sec:matroid_11}
The matroid $M_{11}$ is of size $11$.
It has a representation over $\Q(\sqrt{5})$ whose projectivization consists of the sides of a regular pentagon together with its five diagonals and the line at infinity.
We call this arrangement the \emph{pentagon arrangement}.
It is depicted in~\Cref{fig:pentagon}.
It is known that the pentagon arrangement in characteristic zero is free but not inductively free~\cite[Example~4.59]{TO}.
We will prove that it is free over a field $k$ if and only if $\operatorname{char} k \neq 2$.

The matroid $M_{11}$ has the representation slice
\[
  \Sigma_{M_{11}} = V(a^2-a-1)\subseteq \Spec \Z[a]
\]
which parametrizes all representations of $M_{11}$ given by the matrix:
\begin{scriptsize}\begin{equation}\label{eq:mat11}
	\begin{pmatrix*}[r]
	1 & 0 & 1 & 1   	& 0 & 1 & 1   & 0& 0  	    & 1		& 1 \\
	0 & 1 & 1 & a+1   & 0 & 0 & 0   & 1& 1 		  & -a   & -a \\
	0 & 0 & 0 & 0   	& 1 & 1 & a  & -1& -a+1 & a+1 & a  
	\end{pmatrix*}\mbox{.}
	\end{equation}
\end{scriptsize}
Thus over characteristic $0$, a point in $\Slice_{M_{11}}$ involves the golden ratio.
Projecting $\Slice_{M_{11}}$ to $\Spec \Z$ we found that $M_{11}$ admits a representation in all characteristics.
In characteristic~$5$ for instance, the equation $a^2-a-1$ factors to $(a+2)^2$ which means that $M_{11}$ admits a representation over the prime field $\mathbb{F}_5$ in this characteristic.

\begin{figure}
	\centering
	\includegraphics[width=0.5\linewidth]{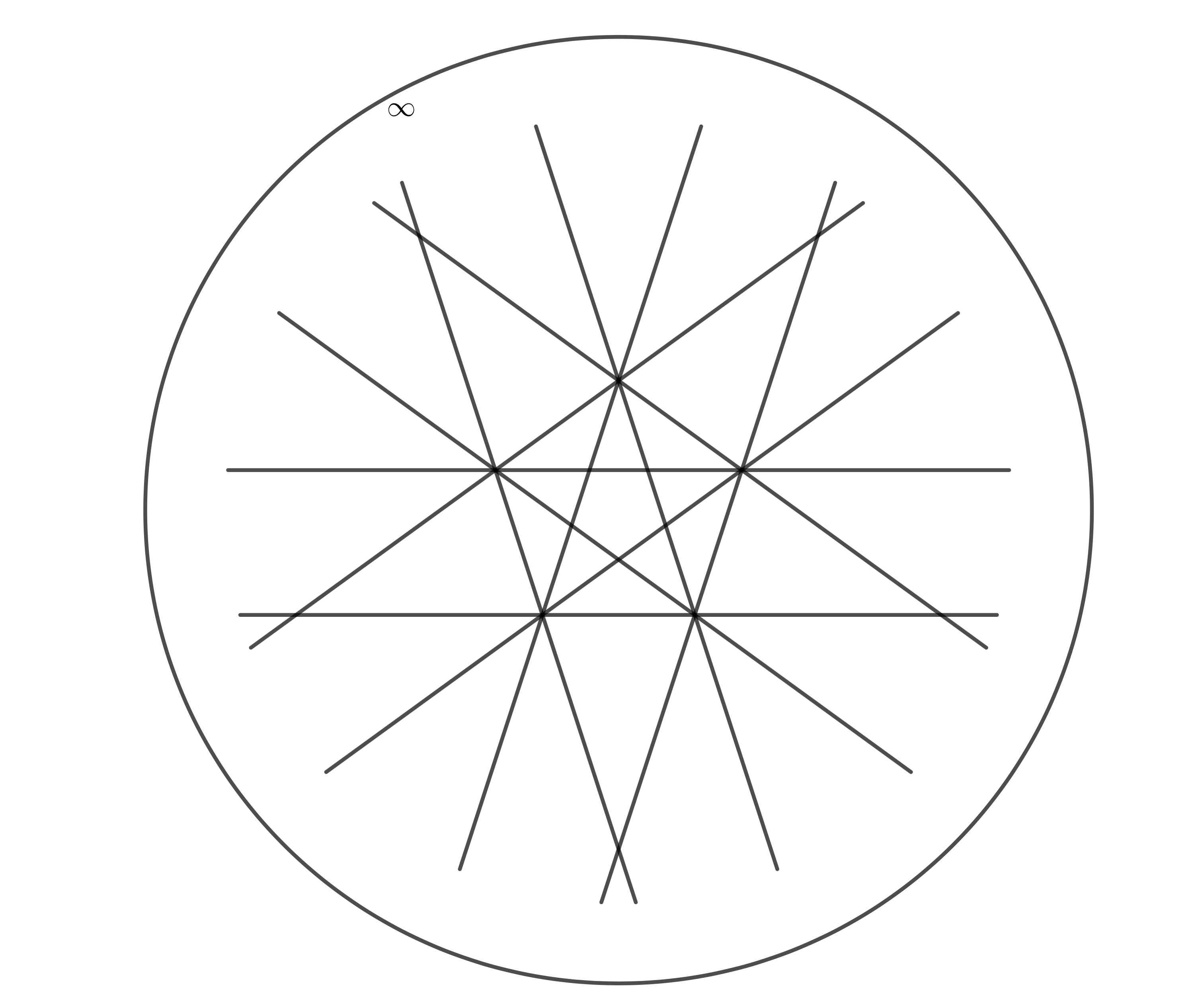}
	\caption{A projectivization of the pentagon representation of $M_{11}$.}
	\label{fig:pentagon}
\end{figure}

\subsection{The matroid $M_{12}^1$}
The matroid $M_{12}^1$ is a matroid of size $12$.
Over $\CC$ is has a representation that is the reflection arrangement over the complex reflection group $G(4,4,3)$ and is given by the following equation:
\[(x^4-y^4)(x^4-z^4)(y^4-z^4).\]
As all reflection arrangements, this arrangement is free over $\CC$.

The matroid $M^1_{12}$ has the representation slice
\[
  \Sigma_{M^1_{12}} = V(a^2-2a+2)\setminus V(2) \setminus V(a-1)\subseteq \Spec \Z[a]
\]
which parametrizes all representations of $M^1_{12}$ given by the matrix:
\begin{scriptsize}\[
	\begin{pmatrix*}[r]
	1 & 0 & 1 & 1   	& 0 & 1         & 1        & 1         & 1 & 0	   & 1        &   1 \\
	0 & 1 & 1 & -1     & 0 & -a+1     & -a+1  & -a+1  & 0 & 1      & 1        & -1\\
	0 & 0 & 0 & 0   	& 1 & 1         & a        & -a+2  & 1 & -a+1 & -a+2 &  a
	\end{pmatrix*}\mbox{.}
	\]\end{scriptsize}

\subsection{The matroid $M_{12}^2$}
The matroid $M_{12}^2$ is a matroid of size $12$.
It is only representable over field extensions of $\mathbb{F}_4$ as it has the representation slice
\[
  \Sigma_{M_{12}^2} = V(2)\setminus V(a) \setminus V(a+1)\subseteq \Spec \Z[a]
\]
which parametrizes all representations of $M^2_{12}$ given by the matrix:
\begin{scriptsize}\[
	\begin{pmatrix*}[r]
	1 & 0 & 1 & 1         & 0 & 1       & 1      & a+1        & 1 & 0	   & a+1 &   1 \\
	0 & 1 & 1 & a^2+1 & 0 & a+1   & a+1  & a^2+1   & 0 & a+1  & a+1 & a^2+1\\
	0 & 0 & 0 & 0         & 1 & 1       & a       & -a          & 1 & 1      & -a   &  a
	\end{pmatrix*}\mbox{.}
	\]\end{scriptsize}

\subsection{The matroid $M_{13}^1$}
The matroid $M_{13}^1$ is of size $13$.
Its representations over $\CC$ were first studied by Abe, Cuntz, Kawanoue, and Nozawa who proved that it is the smallest free arrangement in rank $3$ over $\CC$ that is free but not recursively free~\cite{ACKN16}.
It also has a representation over $\RR$ which is depicted in~\cite[Figure 1]{ACKN16}.

The matroid $M^1_{13}$ has the $1$-dimensional representation slice
\[
  \Sigma_{M^1_{13}} = V(a_1a_2^2-2a_1a_2+a_1-a_2)\setminus V(6a_2^3+4a_1^2-10a_1a_2-a_2^2-a_1-9a_2)\subseteq \Spec \Z[a_1,a_2]
\]
which parametrizes all representations of $M^1_{13}$ given by the matrix:
\begin{scriptsize}\[
	\begin{pmatrix*}[r]
	1 & 0 & 1 & 1 &      1 &  0 & 1 & 1 &      1 &      0 &  0 &  0 &       1 \\
	0 & 1 & 1 & -a_2^2+a_2 & -1 & 0 & 0 & 0 &      0 &      1 &  1 &  1 &       -a_2+1 \\
	0 & 0 & 0 & 0 &      0 &  a_1 & 1 & a_1a_2-a_1+a_2 & -a_2^2+a_2 & -1 & a_1 & -a_1a_2+a_1-a_2 & a_2
	\end{pmatrix*}\mbox{.}
	\]\end{scriptsize}

\subsection{The remaining four matroids}
There are four more matroids of size $13$ which we denote by $M_{13}^2,\dots,M_{13}^5$.
The prominence of these examples is not known to us.
\begin{description}
\item[$M^2_{13}$] 
The matroid $M^2_{13}$ has the representation slice
\[
  \Sigma_{M^2_{13}} = V(a^2+1)\setminus V(2)\subseteq \Spec \Z[a]
\]
which parametrizes all representations of $M^2_{13}$ given by the matrix:
\begin{scriptsize}\[
	\begin{pmatrix*}[r]
	1 & 0 & 1 & 1  &    1 & 0 & 1 & 1   &   1 & 1 & 1  & 1  & 1  \\
	0 & 1 & 1 & -1 &  -a  & 0 & 0 & 0   &   0 & 1 & -1 & -a & a \\
	0 & 0 & 0 & 0  &    0 & 1 & 1 & 1-a & a+1 & 2 & 2  & 2  & 2
	\end{pmatrix*}\mbox{.}
	\]\end{scriptsize}
\item[$M^3_{13}$] 
The matroid $M^3_{13}$ has the representation slice
\[
  \Sigma_{M^3_{13}} = V(a^2-a-1)\setminus V(2)\setminus V(a)\subseteq \Spec \Z[a]
\]
which parametrizes all representations of $M^3_{13}$ given by the matrix:
\begin{scriptsize}\[
	\begin{pmatrix*}[r]
	1 & 0 & 1 & 1    &  1 & 0 & 1 &  1 &  1  &  0 & 0    &  1   &  1\\
	0 & 1 & 1 & -a+1 & -a & 0 & 0 &  0 &  0  &  1 & 1    & -a+1 & -1\\
	0 & 0 & 0 & 0    &  0 & 1 & 1 & -a & a+1 & -1 & -a-1 & a    &  1
	\end{pmatrix*}\mbox{.}
	\]\end{scriptsize}
\item[$M^4_{13}$] 
The matroid $M^4_{13}$ has the representation slice
\[
  \Sigma_{M^4_{13}} = V(2a^2-2a+1)\setminus V(2a-3)\setminus V(3a-1)\subseteq \Spec \Z[a]
\]
which parametrizes all representations of $M^4_{13}$ given by the matrix:
\begin{scriptsize}\[
	\begin{pmatrix*}[r]
	1 & 0 & 1 & 1     &   1 & 0 & 1 & 1  &   0  & 1 & 1    &    0 & 1 \\
	0 & 1 & 1 & -2a+1 & 1-a & 0 & 0 & 0  &   1  & a & 1    &    1 & 1 \\
	0 & 0 & 0 & 0     &   0 & 1 & 1 & 2a & 2-2a & 1 & 2-2a & 1-2a & 1
	\end{pmatrix*}\mbox{.}
	\]\end{scriptsize}
\item[$M^5_{13}$] 
The matroid $M^5_{13}$ has the representation slice
\[
  \Sigma_{M^5_{13}} = V(2a^2+2a+1) \setminus V(2a+3)\setminus V( a+2 )\subseteq \Spec \Z[a]
\]
which parametrizes all representations of $M^5_{13}$ given by the matrix:
\begin{scriptsize}\[
	\begin{pmatrix*}[r]
	1 & 0 & 1 & 1     &   1 & 0 & 1 & 1  &   1   & 0 &  0 &  1    & 1 \\
	0 & 1 & 1 & -2a-1 & -2a & 0 & 0 & 0  &   0   & 1 &  1 & -2a-1 & -2a-2 \\ 
	0 & 0 & 0 & 0     &   0 & 1 & 1 & -a & -2a-1 & a & -1 & 1     & 1
	\end{pmatrix*}\mbox{.}
	\]\end{scriptsize}
\end{description}

\section{Ziegler's restriction and Yoshinaga's criterion} \label{sec:Ziegler}

A (central) arrangement of hyperplanes $\A$ is a finite collection of hyperplanes in a vector space $V$ of dimension $r$ over a field $k$ containing the origin.

A generalization is a \textbf{multiarrangement}, which is defined to be an arrangement of hyperplanes $\A$ with a multiplicity function $m : \A \rightarrow \Z_{>0}$.
A multiarrangement was first defined by Ziegler in~\cite{Zie89} and is denoted by~$\Am$.
Define $|m|\coloneqq \sum_{H\in\A} m(H)$. 

Denote by $S$ the polynomial ring $k[x_1,\ldots,x_{r}]$ where $r = \dim V$.
For each hyperplane~$H$ we can fix a linear defining equation $\alpha_H \in S$.
The $S$-module $D\Am$ is the \textbf{module of logarithmic derivations} of $\Am$ defined as
\[D \Am \coloneqq \left\{ \theta\in \Der(S) \middle|\, \theta(\alpha_H) \in \alpha_H^{m(H)} S  \mbox{ for all }H\in \A \right\},\]
where $\Der(S) \cong S^r$ is the module of all derivations on $S$.
If $D\Am$ is a free $S$-module, we call $\Am$ a \textbf{free multiarrangement}. 
In the case of a free multiarrangement $\Am$ one can choose a homogeneous basis $\theta_1 , \ldots , \theta_r$ of $D\Am$.
In this case we define $\exp \Am =(\deg \theta_1 , \ldots, \deg \theta_r )$ to be the \textbf{exponents} of $\Am$ where a derivation $\theta\in \Der(S)$ is homogeneous with $\deg \theta = d$ if $\theta(\alpha)$ is a homogeneous polynomial of degree $d$ for any $\alpha\in V^*$.

\begin{defn}[Ziegler restriction]
	Let $\A$ be an arrangement and fix some $H\in \A$.
	The \textbf{restricted arrangement} is defined as $\A^{H}\coloneqq \lbrace H \cap L\mid L\in \A\setminus \lbrace H\rbrace \rbrace$.
	A natural multiplicity function $m^H$ on $\A^H$ arises by counting how often a restricted hyperplane $X\in \A^H$ appears as intersection of hyperplanes in $\A$:
	\[
	m^H(X) \coloneqq |\{ L\in \A\setminus \{H\}\mid H\cap L = X \}|.
	\]
	We call the multiarrangement $(\A^H,m^H)$ the \textbf{Ziegler restriction} of $\A$ to the hyperplane $H$.
\end{defn}

\begin{exmp}\label{ex:pentagon_ziegler}
	Consider again the arrangements realizing the matroid $M_{11}$ parametrized by the matrix~\eqref{eq:mat11} in~\Cref{sec:matroid_11}.
	The Ziegler restrictions onto the hyperplane $\{z=0\}$ are parametrized by the matrix
	\[
		\begin{pmatrix*}[r]
		1&   0&   1&    1&          -1\\
		0& 1&   1&    a+1&      a
		\end{pmatrix*}
	\]
	with multiplicities $(3,3,1,1,2)$ over the representation slice $\Sigma_{M_{11}} = V(a^2-a-1)\subseteq \Spec \Z[a]$.
\end{exmp}

Note that a multiarrangement $\Am$ of rank $2$ is always free for some exponents~$(d_1,d_2)$ with $d_1+d_2 = |m|$~\cite{Zie89}.

Our first technical tool to compute the free locus within a representation slice of a matroid is the following remarkable theorem by Yoshinaga.
\begin{theorem}\cite[Theorem 3.2]{Yos05}\label{thm:yoshinaga} \label{theorem:Yoshinaga}
	Let $\A$ be an arrangement of rank $3$ and assume the characteristic polynomial of $\A$ factors as $\chi_{\A}(t) = (t-1)(t-d_2)(t-d_3)$ for some integers~$d_2\leq d_3$.
	Let $H$ be any hyperplane in $\A$ and assume that the Ziegler restriction $(\A^H,m^H)$ is free with exponents $\exp\Am = (d_1',d_2')$ and $d_1'\leq d_2'$.
	Then it holds that
	\begin{equation}\label{eq:yoshinaga}
	d_2d_3 \geq d_1'd_2'
	\end{equation}
	and $\A$ is free if and only if~\eqref{eq:yoshinaga} holds with equality.
\end{theorem}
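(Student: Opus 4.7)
The plan is to combine Terao's factorization theorem, Ziegler's restriction theorem for free arrangements, a Hilbert-series comparison through the Ziegler restriction map, and Saito's freeness criterion at the end.

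First, for the ``only if'' direction: if $\A$ is free, then Terao's factorization theorem identifies the exponents of $\A$ with the roots of $\chi_\A(t)$, so $\exp(\A) = (1, d_2, d_3)$. Ziegler's theorem, which states that a free arrangement restricts to a free multiarrangement with matching exponents, then yields that $(\A^H, m^H)$ is free with exponents $(d_2, d_3)$. Uniqueness of exponents forces $(d_1', d_2') = (d_2, d_3)$, giving the equality in \eqref{eq:yoshinaga}.

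For the inequality in general, I would choose coordinates so that $\alpha_H = x_3$ and consider the graded $S$-linear restriction map
\[
  \pi \colon D(\A) \longrightarrow D(\A^H, m^H), \quad f_1\partial_{x_1}+f_2\partial_{x_2}+f_3\partial_{x_3} \longmapsto \bar f_1\partial_{x_1}+\bar f_2\partial_{x_2},
\]
where $\bar{\cdot}$ denotes reduction modulo $x_3$; this lands in $D(\A^H, m^H)$ essentially by the definition of $m^H$. Next I would compare Hilbert series: on one side, $\operatorname{Hilb}(D(\A),s)$ can be read off from $\chi_\A(t)$ via the Solomon--Terao formula; on the other side, freeness of $(\A^H, m^H)$ gives $\operatorname{Hilb}(D(\A^H, m^H),s) = (s^{d_1'} + s^{d_2'})/(1-s)^2$. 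Expanding both sides and matching coefficients, the first nontrivial coefficient yields $d_1' + d_2' = d_2 + d_3$ (consistent with $|m^H| = |\A|-1$), and the next coefficient comparison produces the inequality $d_2 d_3 \geq d_1' d_2'$.

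For the ``if'' direction, assume $d_2 d_3 = d_1' d_2'$. Equality in the Hilbert-series coefficient comparison should force $\pi$ to be surjective in the relevant degrees, so I can lift a homogeneous free basis $\eta_1, \eta_2$ of $D(\A^H, m^H)$ to derivations $\theta_1, \theta_2 \in D(\A)$ of degrees $d_1', d_2'$. Together with the Euler derivation $\theta_E \in D(\A)_1$, Saito's freeness criterion applies: the determinant $\det(\theta_i(x_j))$ has degree $1 + d_1' + d_2' = |\A|$ and is divisible by $\prod_{H' \in \A} \alpha_{H'}$, so it equals a scalar multiple of this product; nonvanishing of the scalar, which follows from the linear independence of the chosen lifts modulo $x_3$ under the equality hypothesis, yields a free basis of $D(\A)$. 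The main obstacle will be the Hilbert-series comparison: tracking both sides through the Solomon--Terao formula and the freeness of $(\A^H, m^H)$ to extract the precise inequality, and then arguing rigorously that equality forces $\pi$ to admit lifts in the prescribed degrees rather than merely existing in some shifted degrees.
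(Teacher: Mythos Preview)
The paper does not give a proof of this theorem; it is quoted from \cite{Yos05} and only the corollary is derived in the paper. So there is no in-paper argument to compare against.

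Your overall architecture is correct and in fact parallels Yoshinaga's: Terao's factorization plus Ziegler's restriction theorem handles the ``only if'' direction, and the Saito-criterion endgame for ``if'' is exactly right once you know the restriction map $\pi$ is surjective. The gap is in the middle step. You assert that $\operatorname{Hilb}(D(\A),s)$ can be read off from $\chi_\A$ via the Solomon--Terao formula; this is false. Solomon--Terao expresses $\chi_\A$ through a specialization involving the Hilbert series of \emph{all} the $D^p(\A)$ together and does not isolate $\operatorname{Hilb}(D(\A),s)$. More decisively, $\operatorname{Hilb}(D(\A),s)$ is \emph{not} a combinatorial invariant even in rank~$3$: for the matroid $M_9=AG(2,3)$ treated in this very paper, the realization over $\mathbb{F}_3$ admits the extra degree-$3$ derivation $x^3\partial_x+y^3\partial_y+z^3\partial_z$ (the Frobenius of the Euler field), so $\dim D(\A)_3$ is strictly larger in characteristic~$3$ than in characteristic~$0$. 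Hence no identity can recover the full Hilbert series from $\chi_\A$, and your proposed ``coefficient matching'' cannot produce the inequality as stated.

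What actually drives the inequality is a finite-length cokernel. Setting $D_H(\A)=\{\theta\in D(\A):\theta(\alpha_H)=0\}$ one has
\[
0\longrightarrow D_H(\A)(-1)\longrightarrow D_H(\A)\xrightarrow{\ \pi\ } D(\A^H,m^H)\longrightarrow C\longrightarrow 0.
\]
A local argument (every localization of $\A$ along a flat $X\supset H$ is essentially a rank-$2$ arrangement, hence free, so $\pi$ is surjective there) shows that $C$ has finite length. Its length can then be computed either by a direct local summation or, equivalently, by observing that only the Hilbert \emph{polynomial} of $D_H(\A)$ enters the computation of $\operatorname{length}(C)$, and that polynomial is fixed by the combinatorial Chern classes $c_1=-(d_2+d_3)$, $c_2=d_2d_3$ of the rank-$2$ bundle $\widetilde{D_H(\A)}$ on $\mathbb{P}^2$. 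Either route gives $\operatorname{length}(C)=d_2d_3-d_1'd_2'\ge 0$, with equality iff $C=0$ iff $\pi$ is surjective; at that point your lifting-plus-Saito argument finishes the ``if'' direction.
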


\begin{rmrk}
	Originally, \Cref{thm:yoshinaga} was formulated only for fields of characteristic $0$ in~\cite{Yos05}.
	This assumption is however not essential, as it was dropped in the subsequent paper by Abe and Yoshinaga which generalized~\Cref{thm:yoshinaga} to higher dimension~\cite{AY13}.
\end{rmrk}

%

\begin{coro} \label{coro:yoshinaga}
	In the notation of \Cref{thm:yoshinaga} the arrangement $\A$ is free if and only if
	\[
	D(\A^H, m^H)_{d_2 - 1} = 0 \mbox{.}
	\]
\end{coro}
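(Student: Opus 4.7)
The plan is to deduce the corollary directly from Theorem~\ref{thm:yoshinaga} by turning the inequality \eqref{eq:yoshinaga} into a vanishing statement for a graded piece. The module-theoretic input is the remark (stated just before Theorem~\ref{thm:yoshinaga}) that every rank~$2$ multiarrangement is free. Applied to the Ziegler restriction $(\A^H,m^H)$, which has rank~$2$, this gives a graded isomorphism $D(\A^H,m^H)\cong S(-d_1')\oplus S(-d_2')$ with exponents $d_1'\leq d_2'$, where $S$ is the coordinate ring of the $2$-dimensional ambient space of $\A^H$. Hence the graded piece $D(\A^H,m^H)_e$ vanishes precisely when $e<d_1'$, so the assumption $D(\A^H,m^H)_{d_2-1}=0$ is equivalent to the numerical condition $d_1'\geq d_2$.

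The remaining step is purely arithmetic. The coefficient relation for a central rank~$3$ arrangement reads $\chi_\A(t)=t^3-|\A|\,t^2+\cdots$, so the factorization $\chi_\A(t)=(t-1)(t-d_2)(t-d_3)$ forces $d_2+d_3=|\A|-1$. On the other hand $d_1'+d_2'=|m^H|=|\A|-1$ by definition of the Ziegler multiplicity. Therefore the two ordered pairs $(d_2,d_3)$ and $(d_1',d_2')$ have the same sum, and a one-line elementary argument about pairs of integers with fixed sum shows that $d_1'd_2'\leq d_2 d_3$ is equivalent to $d_1'\leq d_2$ (and $d_2'\geq d_3$), with equality of products exactly when the two pairs coincide.

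Combining the two steps yields the chain of equivalences $D(\A^H,m^H)_{d_2-1}=0 \iff d_1'\geq d_2 \iff d_1'=d_2 \iff d_1'd_2'=d_2 d_3$, and the last condition is equivalent to the freeness of $\A$ by Theorem~\ref{thm:yoshinaga}. I do not anticipate a genuine obstacle: the module-theoretic half reduces to the structure theorem for rank~$2$ multiarrangements, and the rest is a rearrangement of an elementary inequality. The only piece of bookkeeping that deserves care is the matching $d_2+d_3=d_1'+d_2'=|\A|-1$, which uses only the standard relation between the second coefficient of $\chi_\A(t)$ and the number of hyperplanes.
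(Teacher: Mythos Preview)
Your proof is correct and follows essentially the same route as the paper's own argument: both use freeness of the rank~$2$ Ziegler restriction to identify $D(\A^H,m^H)\cong S(-d_1')\oplus S(-d_2')$, match the sums $d_2+d_3=|\A|-1=d_1'+d_2'$, and then convert Yoshinaga's inequality into the equivalence $\A$ free $\iff d_1'=d_2 \iff D(\A^H,m^H)_{d_2-1}=0$. Your write-up is a bit more explicit about the elementary pairs-with-fixed-sum argument, but the logic is the same.
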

\begin{proof}
	Since $\A$ is an arrangement of rank $3$, the Ziegler restriction $(\A^H,m^H)$ is free with exponents $\exp\Am = (d_1',d_2')$ and $d_1'\leq d_2'$.
	By definition of the Ziegler restriction we have $d_2+d_3=|\A|-1=d_1'+d_2'$.
	
	\Cref{thm:yoshinaga} therefore implies $d_2\geq d_1'$ and $\A$ is free if and only if $d_2=d_1'$.
	Thus, $\A$ is free if and only if the lowest degree derivation in $D(\A^H, m^H)$ has degree $d_2$.
\end{proof}

\section{Freeness of arrangements and multiarrangements over a field} \label{sec:multiarrangements}

The algorithm we use to compute $D\Am$ is a direct translation into the language of Gröbner bases of the following Proposition (cf.~\cite[Sec.~6.1]{BC_Coxeter}).
\begin{prop} \label{prop:kernel}
  $D\Am$ is the kernel of the coproduct morphism
  \[
    \mathrm{d}^{\Am} \coloneqq (\pi_{\alpha_H^{m(H)}} \circ \mathrm{d} \alpha_H)_{H \in \A}: \Der(S) \xrightarrow{
    \left(\begin{array}{cccc}
    \frac{\partial \alpha_{H_1}}{\partial x_1} & \cdots & \cdots & \frac{\partial \alpha_{H_n}}{\partial x_1} \\
    \vdots & & & \vdots \\
    \frac{\partial \alpha_{H_1}}{\partial x_r} & \cdots & \cdots & \frac{\partial \alpha_{H_n}}{\partial x_r}
    \end{array}\right)
    } \bigoplus_{H \in \A} S / \alpha_H^{m(H)} S \mbox{,}
  \]
  where $\mathrm{d}f : \Der(S) \to S, \theta \mapsto \theta(f) = \sum_{i=1}^r \theta_i \frac{\partial f}{\partial x_i}$ is the exterior derivative of $f \in S$ and $\pi_f: S \to S / f S$ is the canonical projection.
\end{prop}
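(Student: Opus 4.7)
The proposition is essentially a repackaging of the defining condition of $D\Am$ in the language of a kernel of an explicit $S$-linear map, so my plan is to unfold the definitions in order and verify that the matrix displayed really does compute the coproduct $(\pi_{\alpha_H^{m(H)}} \circ \mathrm{d} \alpha_H)_{H \in \A}$.

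First, I would recall that under the standard identification $\Der(S) \cong S^r$ sending $\theta = \sum_{i=1}^r \theta_i \frac{\partial}{\partial x_i}$ to $(\theta_1,\ldots,\theta_r)$, the exterior derivative evaluated on $\theta$ becomes
\[
  \mathrm{d}\alpha_H(\theta) \;=\; \theta(\alpha_H) \;=\; \sum_{i=1}^r \theta_i \frac{\partial \alpha_H}{\partial x_i}.
\]
Thus the $H$-th component of the coproduct morphism takes the row vector $(\theta_1,\ldots,\theta_r)$ to $\sum_i \theta_i \frac{\partial \alpha_H}{\partial x_i} \bmod \alpha_H^{m(H)} S$. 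Collecting these components for $H = H_1,\ldots,H_n$ into a single map, one reads off precisely the displayed $r \times n$ Jacobian-type matrix (its $(i,j)$-entry being $\frac{\partial \alpha_{H_j}}{\partial x_i}$), followed componentwise by the canonical projections $\pi_{\alpha_H^{m(H)}}: S \twoheadrightarrow S/\alpha_H^{m(H)} S$. This verifies that the displayed matrix indeed represents $\mathrm{d}^\Am$.

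Second, I would finish by a direct calculation of the kernel: $\theta \in \ker \mathrm{d}^\Am$ if and only if $\pi_{\alpha_H^{m(H)}}(\theta(\alpha_H)) = 0$ for every $H \in \A$, if and only if $\theta(\alpha_H) \in \alpha_H^{m(H)} S$ for every $H \in \A$. By definition of $D\Am$, this last condition is exactly the defining condition for membership in $D\Am$, so $\ker \mathrm{d}^\Am = D\Am$ as submodules of $\Der(S)$.

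There is no real obstacle here: both directions of the equivalence are immediate from the definitions, and the grading (which is implicit in the statement since all $\partial \alpha_H/\partial x_i$ are constants of degree $0$ and $\alpha_H^{m(H)}$ is homogeneous of degree $m(H)$) is automatically respected, so the kernel acquires the correct graded $S$-module structure. The only care needed is to fix conventions so that the matrix displayed is applied to $\theta \in S^r$ on the correct side to produce the vector $(\theta(\alpha_{H_1}),\ldots,\theta(\alpha_{H_n}))$ before projecting into $\bigoplus_{H} S/\alpha_H^{m(H)} S$.
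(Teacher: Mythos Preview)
Your proof is correct and follows essentially the same approach as the paper's own proof, which simply observes that the kernel of the coproduct (universal) morphism is the intersection of the kernels of the individual component maps $\pi_{\alpha_H^{m(H)}} \circ \mathrm{d}\alpha_H$, and that this intersection is the very definition of $D\Am$. Your write-up is just a more detailed unpacking of the same one-line argument, with the additional (harmless) verification that the displayed Jacobian matrix indeed represents the map.
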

\begin{proof}
  The kernel of the universal morphism coincides with the intersection of the kernels of the individual maps $\pi_{\alpha_H^{m(H)}} \circ \mathrm{d} \alpha_H: \Der(S) \to S / \alpha_H^{m(H)} S$.
  The latter was the definition of $D\Am$.
\end{proof}

\begin{coro} \label{coro:psi}
  $D\Am$ is isomorphic to the kernel of a morphism $\psi^{\Am}$ of free graded modules of finite rank.
  More precisely, $D\Am$ is isomorphic to the pullback $K$ in the pullback diagram
  \begin{center}
  \begin{tikzpicture}
    \node(DerA) {$K$};
    \node(SumT) at (4,0) {$\bigoplus_{H \in \A} S(-m(H))$};
    \node(DerS) at (0,-2) {$\Der(S)$};
    \node(SumS) at (4,-2) {$\bigoplus_{H \in \A} S$};
    \draw[right hook-stealth'] (SumT) -- node[right]{$\mu$} (SumS);
    \draw[-stealth'] (DerS) -- node[above]{$\jmath$} (SumS);
    \draw[right hook-stealth'] (DerA) -- node[right]{$\iota$} (DerS);
    \draw[-stealth'] (DerA) -- (SumT);
  \end{tikzpicture}
  \end{center}
  where
  \[
    \jmath:\Der(S)
    \xrightarrow{
    \begin{pmatrix}
    \frac{\partial \alpha_{H_1}}{\partial x_1} & \cdots & \frac{\partial \alpha_{H_n}}{\partial x_1} \\
    \vdots & & \vdots \\
    \frac{\partial \alpha_{H_1}}{\partial x_r} & \cdots & \frac{\partial \alpha_{H_n}}{\partial x_r}
    \end{pmatrix}
    }
    \bigoplus_{H \in \A} S
  \]
  is given by the Jacobi matrix and
  \[
    \mu:
    \bigoplus_{H \in \A} S(-m(H))
    \xhookrightarrow{
    \begin{pmatrix}
\alpha_{H_1}^{m(H_1)} & 0 & \cdots & 0 \\
    0 & \alpha_{H_2}^{m(H_2)} & \ddots & \vdots \\
    \vdots & \ddots & \ddots & 0 \\
    0 & \cdots & 0 & \alpha_{H_n}^{m(H_n)}    \end{pmatrix}
    }
    \bigoplus_{H \in \A} S
  \]
  is an embedding.
  The pullback $K$ (which is isomorphic to $D\Am$) can in turn be computed as the kernel of the coproduct morphism
  \[
    \psi^{\Am} \coloneqq \left(\begin{array}{c}
    \jmath \\
    \hline
    \mu
    \end{array}\right):
    \Der(S) \oplus \bigoplus_{H \in \A} S(-m(H))
    \to
    \bigoplus_{H \in \A} S \mbox{.}
  \]
\end{coro}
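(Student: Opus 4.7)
The plan is to deduce \Cref{coro:psi} from \Cref{prop:kernel} by recognizing that the coproduct morphism $\mathrm{d}^{\Am}$ appearing there factors as $\pi \circ \jmath$, where $\jmath$ is the Jacobi matrix and $\pi : \bigoplus_{H\in\A} S \twoheadrightarrow \bigoplus_{H\in\A} S/\alpha_H^{m(H)}S$ is the componentwise canonical projection, and then invoking the standard correspondence between kernels of compositions with cokernel projections and pullbacks.

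First I would observe that the diagonal morphism $\mu$ is indeed a (graded) embedding: its entries $\alpha_H^{m(H)}$ are nonzero, hence nonzerodivisors in the domain $S$, so $\mu$ is injective; the degree shift $S(-m(H))$ exactly compensates for $\deg \alpha_H^{m(H)} = m(H)$, so $\mu$ is homogeneous of degree $0$. By construction, the cokernel of $\mu$ is $\bigoplus_{H\in\A} S/\alpha_H^{m(H)} S$, and the cokernel projection agrees with $\pi$. Therefore \Cref{prop:kernel} rephrases as
\[
  D\Am \;=\; \ker(\mathrm{d}^{\Am}) \;=\; \ker\bigl(\pi \circ \jmath\bigr).
\]

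Next I would apply the routine abelian-category fact that for any morphism $\jmath : \Der(S) \to \bigoplus_{H\in\A} S$ and any embedding $\mu$ with cokernel projection $\pi$, the kernel of $\pi \circ \jmath$ is canonically isomorphic to the pullback of $\jmath$ along $\mu$. This is immediate by chasing elements: $\theta \in \ker(\pi \circ \jmath)$ iff $\jmath(\theta)$ lies in the image of $\mu$, iff there exists (necessarily unique) $f$ with $\mu(f) = \jmath(\theta)$, giving the desired pair $(\theta,f) \in K$. The projection $\iota : K \to \Der(S)$ is injective (since $\mu$ is), matching the monomorphism in the diagram. This identifies $D\Am \cong K$ as claimed.

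Finally, I would recall that any pullback in a module category can be realized as a kernel: $K = \ker\bigl([\jmath, -\mu]\bigr)$, the kernel of the row morphism $\Der(S) \oplus \bigoplus_{H\in\A} S(-m(H)) \to \bigoplus_{H\in\A} S$. Up to the sign automorphism on the second summand this is exactly $\psi^{\Am}$ as written in the statement, so $D\Am \cong \ker \psi^{\Am}$. There is essentially no obstacle here; the only point that requires care is bookkeeping of the graded shifts $S(-m(H))$ to ensure that all maps are homogeneous of degree $0$, so that the isomorphism $D\Am \cong \ker \psi^{\Am}$ is one of graded $S$-modules and hence amenable to Gröbner basis computations in the applications that follow.
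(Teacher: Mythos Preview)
Your proof is correct and follows essentially the same approach as the paper: factor $\mathrm{d}^{\Am}=\pi\circ\jmath$ via \Cref{prop:kernel}, identify $\ker(\pi\circ\jmath)$ with the pullback along the mono $\mu$, and then realize the pullback as $\ker\psi^{\Am}$. The only difference is packaging: the paper isolates the middle step as a standalone abelian-category statement (\Cref{prop:Abelian}) proved by universal properties, whereas you do it inline by element-chasing; your added remarks on why $\mu$ is a graded embedding and on the sign in $[\jmath,-\mu]$ are helpful but not needed for the argument.
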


\begin{exmp}
	We again consider the matroid $M_{11}$.
	The morphism $\psi^{\Am}$ of the Ziegler restriction described in~\Cref{ex:pentagon_ziegler} over the representation slice $\Sigma_{M_{11}} = V(a^2-a-1)$ is given by the matrix
	\begin{scriptsize}	\begin{equation}\label{eq:mat11_matrix}
		\begin{pmatrix}
		1&   0&   1&    1&          -1\\
		0&   1&   1&    a+1&       a\\
		x^3&0&   0&    0&          0\\
		0&   y^3&0&    0&          0\\
		0&   0&   x+y&0&          0\\
		0&   0&   0&    x+(a+1)y&0\\
		0&   0&   0&    0&          (x-ay)^2
		\end{pmatrix},
	\end{equation}\end{scriptsize}
	as a map $S^2\oplus S(-3)^2\oplus S(-1)^2\oplus S(-2)\to S^5$ where\footnote{$\deg a = 0$ and $\deg x = \deg y = 1$.} $S=\Z[a]/(a^2-a-1)\left[x,y\right]$.
\end{exmp}

\Cref{coro:psi} is a special case of the following result valid in any Abelian category:

\begin{prop} \label{prop:Abelian}
  Let $\iota$ be the (necessarily monic) pullback of a monic $\mu: T \hookrightarrow D$ along a morphism $\jmath: S \to D$ in an Abelian category.
  Further consider the cokernel projection $\varepsilon: D \to C$ and the kernel embedding $\kappa$ of $\varepsilon \circ \jmath$.
  \begin{center}
  \begin{tikzpicture}
    \node(DerA) {$K$};
    \node(SumT) at (4,0) {$T$};
    \node(DerS) at (0,-2) {$S$};
    \node(SumS) at (4,-2) {$D$};
    \node(coker) at (4,-4) {$C = \coker \mu$};
    \node(K') at (-4,0) {$K'$};
    \draw[right hook-stealth'] (SumT) -- node[right]{$\mu$} (SumS);
    \draw[-stealth'] (DerS) -- node[above]{$\jmath$} (SumS);
    \draw[right hook-stealth'] (DerA) -- node[right]{$\iota$} (DerS);
    \draw[-stealth'] (DerA) -- node[above]{$\lambda$} (SumT);
    \draw[-doublestealth] (SumS) -- node[right]{$\varepsilon$} (coker);
    \draw[-stealth'] (DerS) -- node[below left]{$\varepsilon \circ \jmath$} (coker);
    \draw[right hook-stealth'] (K') -- node[below left]{$\kappa$} (DerS);
  \end{tikzpicture}
  \end{center}
  Then $\kappa: K' \hookrightarrow S$ and $\iota: K \hookrightarrow S$ define the same subobject in $S$, i.e., these two monos are mutually dominating.
\end{prop}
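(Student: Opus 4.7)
The plan is to show mutual domination by constructing factorizations in both directions, using only the defining universal properties of pullback, kernel, and cokernel, together with the Abelian axiom that every monic equals the kernel of its cokernel.

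First I would show that $\iota$ factors through $\kappa$. Composing the pullback identity $\jmath \circ \iota = \mu \circ \lambda$ with $\varepsilon$ gives
\[
(\varepsilon \circ \jmath) \circ \iota \;=\; \varepsilon \circ \mu \circ \lambda \;=\; 0,
\]
since $\varepsilon \circ \mu = 0$ by definition of cokernel. By the universal property of $\kappa = \ker(\varepsilon \circ \jmath)$, there is a unique $\alpha: K \to K'$ with $\kappa \circ \alpha = \iota$.

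Next I would show that $\kappa$ factors through $\iota$. Here I use the key Abelian-category fact that the monic $\mu$ coincides with $\ker \varepsilon$ (every monic in an Abelian category is the kernel of its cokernel). Since $\varepsilon \circ \jmath \circ \kappa = 0$ by construction of $\kappa$, the universal property of $\mu = \ker \varepsilon$ yields a unique $\lambda': K' \to T$ with $\mu \circ \lambda' = \jmath \circ \kappa$. Now the two morphisms $\kappa: K' \to S$ and $\lambda': K' \to T$ satisfy $\jmath \circ \kappa = \mu \circ \lambda'$, so the universal property of the pullback $(K, \iota, \lambda)$ supplies a unique $\beta: K' \to K$ with $\iota \circ \beta = \kappa$ (and $\lambda \circ \beta = \lambda'$).

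Finally, both $\iota$ and $\kappa$ are monic ($\iota$ because pullbacks of monics are monic, $\kappa$ because it is a kernel), so the identities $\kappa \circ \alpha = \iota$ and $\iota \circ \beta = \kappa$ exhibit $K$ and $K'$ as mutually dominating subobjects of $S$; hence they represent the same subobject. I do not expect any real obstacle here: the only non-formal ingredient is the Abelian-category identification $\mu = \ker(\coker \mu)$, which is standard, and the rest is a diagram chase driven by universal properties.
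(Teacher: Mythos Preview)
Your proof is correct and follows essentially the same route as the paper: both directions are obtained exactly as you describe, first lifting $\iota$ through $\kappa$ via $(\varepsilon\circ\jmath)\circ\iota=0$, then lifting $\kappa$ through $\iota$ by first producing $\lambda':K'\to T$ from $\mu=\ker\varepsilon$ and then invoking the pullback's universal property. You are slightly more explicit than the paper in naming the Abelian-category axiom $\mu=\ker(\coker\mu)$, which the paper uses tacitly when it calls $\mu$ a ``kernel embedding''.
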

\begin{proof}
  We need to construct a unique lift of $\iota$ along $\kappa$ and vice versa, necessarily unique since both morphisms are monos.
  First note that $\iota$ is monic since pullbacks of a monos are monos in any category.
  The morphism $\iota$ lifts  along the kernel embedding $\kappa = \ker(\varepsilon \circ \jmath)$ since $(\varepsilon \circ \jmath) \circ \iota = \underbrace{\varepsilon \circ \mu}_{=0} \circ \lambda = 0$.
  Conversely, the unique lift of $\kappa$ along $\iota$ can constructed as follows:
  First construct the unique kernel lift $\chi: K' \to T$ of $\jmath \circ \kappa: K' \to D$ along the kernel embedding $\mu: T \hookrightarrow D$.
  The desired lift $K' \to K$ of $\kappa$ along $\iota$ is now the universal morphism of the pullback.
\end{proof}

In the notebook \cite{ImageOfPullback} we demonstrate a completely mechanical proof of a generalization of \Cref{prop:Abelian} following Posur's impressive paper \cite{posur2021free}.

\begin{proof}[Proof of \Cref{coro:psi}]
  $D\Am$ is by \Cref{prop:kernel} the kernel of the composition $\varepsilon \circ \jmath$, where $\varepsilon$ is the cokernel projection of $\mu$.
  The first isomorphism in
  \[
    D\Am \cong K \cong \ker \psi^{\Am}
  \]
  is now nothing but \Cref{prop:Abelian} and the second is the well-known fact stated in \Cref{coro:psi}.
\end{proof}

Finally, one computes the kernel of the morphism $\psi^{\Am}$ in the category of free graded modules by computing syzygies.

\section{Fitting ideals}\label{sec:Fitting}

\begin{defn} \label{defn:Fitting}
  Let $A$ be a commutative nonzero unital ring and $\phi: U \to W$ a morphism of free $A$-modules of finite rank.
  After choosing sets of free generators for $U$ and $W$ one can identify $\phi$ with a matrix in $A^{\mathrm{rk}_A U \times \mathrm{rk}_A W}$.
  For $\ell \in \Z$ set $m \coloneqq \mathrm{rk}_A W - \ell$ and define the $\ell$-th \textbf{Fitting ideal}
  \[
    \underbrace{\operatorname{Fitt}_\ell \phi}_{\unlhd A} \coloneqq
    {\small
    \begin{cases}
      A, & m \leq 0,
      \\
      \text{the ideal generated by all $m \times m$ minors of $\phi$}, & 0 < m \leq \min\{ \mathrm{rk}_A U, \mathrm{rk}_A W \},
      \\
      \{0\}, & m > \min \{ \mathrm{rk}_A U, \mathrm{rk}_A W \} \mbox{,}
    \end{cases}}
  \]
  or, equivalently, for $\operatorname{index} \phi \coloneqq \mathrm{rk}_A W - \mathrm{rk}_A U$
  \[
    \underbrace{\operatorname{Fitt}_\ell \phi}_{\unlhd A} \coloneqq
    {\small
    \begin{cases}
      A, & \ell \geq \mathrm{rk}_A W,
      \\
      \text{the ideal generated by all $m \times m$ minors of $\phi$}, & \max\{ \operatorname{index} \phi, 0 \} \leq \ell < \mathrm{rk}_A W,
      \\
      \{0\}, & \ell < \max\{ \operatorname{index} \phi, 0 \} \mbox{.}
    \end{cases}}
  \]
\end{defn}

\begin{theorem}[Fitting's Lemma, {\cite[Cor.-Def.~20.4]{Eis}}] \label{thm:Fitting}
  The $\ell$-th Fitting ideal of $\phi$ only depends on the isomorphism type of $\coker \phi$.
  In particular, it does not depend on the choice of free bases in \Cref{defn:Fitting}.
\end{theorem}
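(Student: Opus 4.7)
The plan is to prove Fitting's Lemma in three stages, following the classical strategy (which is what Eisenbud does), tailored to the formulation used here.

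First, I verify that $\operatorname{Fitt}_\ell \phi$ is independent of the chosen free bases for $U$ and $W$. A change of basis replaces the matrix of $\phi$ by $V \phi V'$, where $V \in \operatorname{GL}_{\mathrm{rk}_A W}(A)$ and $V' \in \operatorname{GL}_{\mathrm{rk}_A U}(A)$. By the Cauchy–Binet formula, every $m \times m$ minor of $V \phi V'$ is an $A$-linear combination of $m \times m$ minors of $\phi$; the same holds with the roles exchanged, so the two sets of minors generate the same ideal of $A$. For the degenerate ranges of $\ell$ in \Cref{defn:Fitting} the claim is trivial, so the intermediate range is what needs Cauchy–Binet.

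Second, I establish stability under stabilization: if $\phi \colon U \to W$ and $\phi \oplus \id_{A^k} \colon U \oplus A^k \to W \oplus A^k$, then the two maps have isomorphic cokernel and identical Fitting ideals. The cokernel isomorphism is immediate. For the Fitting ideals, the new matrix acquires a $k \times k$ identity block; iterated Laplace expansion along the $k$ added rows (each of which has a single nonzero entry equal to $1$) reduces every $(m-\ell+k) \times (m-\ell+k)$ minor of the stabilized matrix, up to sign, to an $(m-\ell) \times (m-\ell)$ minor of $\phi$ (and conversely every such minor of $\phi$ appears this way). The index shifts in \Cref{defn:Fitting} are compatible with this because $\mathrm{rk}_A W$ and $\mathrm{rk}_A U$ both increase by $k$.

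Third, and this is the crucial step, I compare two arbitrary finite free presentations $\phi_i \colon U_i \to W_i$, $i=1,2$, of the same module $C$. Fix a surjection $\varepsilon_i \colon W_i \twoheadrightarrow C$ with image $\phi_i = \ker \varepsilon_i$. Because $W_1$ is free, the surjection $\varepsilon_2$ admits a lift $\lambda \colon W_1 \to W_2$ with $\varepsilon_2 \lambda = \varepsilon_1$; symmetrically one obtains $\mu \colon W_2 \to W_1$ with $\varepsilon_1 \mu = \varepsilon_2$. By Schanuel's lemma applied to $\varepsilon_1, \varepsilon_2$ one has an isomorphism
\[
W_1 \oplus U_2 \;\xrightarrow{\cong}\; W_2 \oplus U_1
\]
under which the stabilized presentations $\phi_1 \oplus \id_{U_2}$ and $\phi_2 \oplus \id_{U_1}$ become conjugate by invertible matrices (after possibly composing with automorphisms of $U_1 \oplus U_2$ on the source). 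Combining stages one and two with this identification yields $\operatorname{Fitt}_\ell \phi_1 = \operatorname{Fitt}_\ell \phi_2$.

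The main obstacle is the third stage: one has to carefully align the Schanuel isomorphism with the presentation maps so that after stabilization the two matrices differ only by a change of basis on source and target. Concretely, this requires choosing the lifts $\lambda, \mu$ with enough care to produce mutually inverse automorphisms of the stabilized free modules, and then tracking how adding the free summands $U_1, U_2$ shifts the index $\ell$ symmetrically on both sides. Once this bookkeeping is done, the remaining manipulations reduce to the Cauchy–Binet and Laplace expansion arguments already handled in the first two stages.
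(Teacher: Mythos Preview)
The paper does not prove this theorem; it simply cites \cite[Cor.-Def.~20.4]{Eis} and uses the result as a black box. So there is no proof in the paper to compare against, and your first two stages are the standard ones (aside from a typo in stage two: the minors of the stabilized matrix that matter have size $m+k$, not $m-\ell+k$, since $m = \mathrm{rk}_A W - \ell$ already).

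Stage three, however, contains a genuine error. Schanuel's lemma applied to the surjections $\varepsilon_i\colon W_i \twoheadrightarrow C$ yields
\[
(\ker\varepsilon_1)\oplus W_2 \;\cong\; (\ker\varepsilon_2)\oplus W_1,
\quad\text{i.e.}\quad
(\operatorname{im}\phi_1)\oplus W_2 \;\cong\; (\operatorname{im}\phi_2)\oplus W_1 .
\]
It does \emph{not} give $W_1\oplus U_2 \cong W_2\oplus U_1$. That claim is false in general: the $\phi_i$ need not be injective, and the ranks of $U_1,U_2$ are completely unconstrained (present a cyclic module $A/I$ once with $n$ generators of $I$ and once with $m\neq n$ generators; then $W_1\oplus U_2$ and $W_2\oplus U_1$ are free of different ranks). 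Since your plan makes the two stabilized matrices conjugate via precisely this isomorphism of targets, the argument breaks.

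The usual repair bypasses Schanuel. One first checks (directly from Cauchy--Binet) that two maps into the \emph{same} free target with the same image have the same Fitting ideals. Then, with your lifts $\lambda,\mu$, compare
\[
\Psi_1=\begin{pmatrix}\phi_1 & \mu \\ 0 & \id_{W_2}\end{pmatrix}\colon U_1\oplus W_2 \to W_1\oplus W_2,
\qquad
\Psi_2=\begin{pmatrix}\id_{W_1} & 0 \\ \lambda & \phi_2\end{pmatrix}\colon W_1\oplus U_2 \to W_1\oplus W_2 .
\]
Each $\Psi_i$ differs from $\phi_i\oplus\id$ by an invertible change of basis on the common target $W_1\oplus W_2$ (so stage two applies), and both have image equal to the kernel of $(\varepsilon_1,-\varepsilon_2)\colon W_1\oplus W_2\to C$. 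This closes the gap without any isomorphism between $W_1\oplus U_2$ and $W_2\oplus U_1$.
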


\begin{rmrk} \label{rmrk:Fitting}
  In particular, in order to compute $\operatorname{Fitt}_\ell \phi$ one might pass from the matrix $\phi$ over $A$ (interpreted as a free presentation of $\coker \phi$) to another matrix $\phi'$, preferably of smaller dimensions such that either
  \begin{enumerate}
    \item $\coker \phi' \cong \coker \phi$ and hence, by \Cref{thm:Fitting}
      \[
        \operatorname{Fitt}_\ell \phi = \operatorname{Fitt}_\ell \phi'
      \]
    \item or $\coker \phi \cong \coker \phi' \oplus A^{\oplus z}$ and hence, by \Cref{defn:Fitting}
      \[
        \operatorname{Fitt}_\ell \phi = \operatorname{Fitt}_{\ell-z} \phi' \mbox{.}
      \]
  \end{enumerate}
  These are the two major tricks which allow us to compute Fitting ideals.
  In \Cref{sec:ByASmallerPresentation} we describe several heuristics for finding smaller presentations of finitely presented modules over computable rings.
\end{rmrk}

\begin{rmrk}
  Let $\phi: U \to W$ be a morphism of finite dimensional vector spaces over some field $k$.
  Then the following statements are equivalent for $c \in \Z$:
  \begin{enumerate}
    \item $\dim_k \ker \phi \leq c$;
    \item $m \coloneqq \dim_k U - c \leq \mathrm{rank}_k \phi$;
    \item There exists a nonzero $m \times m$ minor of $\phi$;
    \item $\mathrm{Fitt}_\ell \phi \neq 0$ for
      \[
        \ell \coloneqq \dim_k W - m = c + \operatorname{index} \phi \mbox{,}
      \]
      where $\operatorname{index} \phi \coloneqq \dim_k W - \dim_k U$.
  \end{enumerate}
\end{rmrk}
\begin{proof}
  This follows from the dimension formula $\dim_k \ker \phi + \mathrm{rank}_k \phi = \dim_k U$ and the definition of the Fitting ideals.
\end{proof}

This remark implies:
\begin{coro} \label{coro:Fitt}
  Let $A$ be a commutative nonzero unital ring, $\phi: U \to W$ a morphism of free $A$-modules of finite rank.
  Denote by $\operatorname{index} \phi \coloneqq \mathrm{rk}_A W - \mathrm{rk}_A U$ and for $c \in \Z$ define $\ell \coloneqq c + \operatorname{index} \phi \geq 0$.
  Then
  \[
    V(\mathrm{Fitt}_\ell \phi)
    =
    \{ \p \in \Spec A \mid \dim_{\kappa(\p)} \ker \phi_{\kappa(\p)} > c \} \mbox{,}
  \]
  where $\phi_{\kappa(\p)}: U_{\kappa(\p)} \to W_{\kappa(\p)}$ is the specialization of $\phi$ at $\p$ and $\kappa(\p) \coloneqq \mathrm{Frac}(A/\p)$ is the residue class field of $\p$.
\end{coro}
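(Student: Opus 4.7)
The plan is to reduce the claim to the preceding remark via the compatibility of Fitting ideals with base change. Specifically, I will use the identity
\[
\operatorname{Fitt}_\ell \phi \cdot \kappa(\p) = \operatorname{Fitt}_\ell \phi_{\kappa(\p)} \unlhd \kappa(\p) \mbox{,}
\]
which is immediate from \Cref{defn:Fitting}: the chosen free bases of $U$ and $W$ induce free bases of $U_{\kappa(\p)}$ and $W_{\kappa(\p)}$, and the generating $(\mathrm{rk}_A W - \ell) \times (\mathrm{rk}_A W - \ell)$ minors on either side are images of one another under the ring map $A \to \kappa(\p)$. Since the rank of a free module is preserved under base change, $\operatorname{index} \phi_{\kappa(\p)} = \operatorname{index} \phi$, so the same index $\ell = c + \operatorname{index} \phi \geq 0$ enters on both sides.

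Next I will unwind both sides of the claimed equality. The condition $\p \in V(\operatorname{Fitt}_\ell \phi)$ means by definition $\operatorname{Fitt}_\ell \phi \subseteq \p$, which is precisely the assertion that every generating minor maps to zero in $\kappa(\p)$, equivalently $\operatorname{Fitt}_\ell \phi_{\kappa(\p)} = 0$. Applying the preceding remark to the vector space morphism $\phi_{\kappa(\p)}: U_{\kappa(\p)} \to W_{\kappa(\p)}$ over the field $\kappa(\p)$ yields that $\operatorname{Fitt}_\ell \phi_{\kappa(\p)} \neq 0$ is equivalent to $\dim_{\kappa(\p)} \ker \phi_{\kappa(\p)} \leq c$; taking contrapositives gives
\[
\operatorname{Fitt}_\ell \phi_{\kappa(\p)} = 0 \iff \dim_{\kappa(\p)} \ker \phi_{\kappa(\p)} > c \mbox{.}
\]
Chaining the three equivalences delivers the stated description of $V(\operatorname{Fitt}_\ell \phi)$.

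The argument is essentially bookkeeping: no new computation is needed beyond the preceding remark and the observation that minors commute with ring homomorphisms. The only mild subtlety I expect to be worth spelling out is that the integer $\ell$ appearing in the conclusion coincides with the $\ell$ used when the remark is applied to $\phi_{\kappa(\p)}$, which is guaranteed by the invariance of $\operatorname{index}$ under base change of free modules of finite rank.
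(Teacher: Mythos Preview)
Your proof is correct and follows exactly the route the paper intends: the paper simply writes ``This remark implies:'' before the corollary and gives no further argument, so you have spelled out the implicit details---base-change compatibility of Fitting ideals and the translation of $\p \in V(\operatorname{Fitt}_\ell \phi)$ into $\operatorname{Fitt}_\ell \phi_{\kappa(\p)} = 0$---needed to invoke the preceding remark over the field $\kappa(\p)$.
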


\section{Proof of \Cref{thm:terao}} \label{sec:proof}

We described in \Cref{sec:computing_slice} how to compute an affine or quasi-affine representation slice.

\begin{defn} \label{defn:phi}
  Let $\Slice = \{ \A_{(\p)} \mid \p \in \Spec A \} \equiv \Spec A$ be an affine representation slice of arrangements representing a rank $3$ matroid with integrally splitting characteristic polynomial $\chi(t) = (t-1)(t-d_2)(t-d_3)$ for some roots $d_2,d_3 \in \Z_{>0}$ with $d_2\le d_3$.
  We view the family $\{ \A_{(\p)} \mid \p \in \Spec A \}$ as an arrangement $\A$ over the ring $A$.
  For a fixed $H \in \A$ define the degree $d_2 - 1$ part of the morphism $\psi^{(\A^H,m^H)}$ (defined in \Cref{coro:psi}) as the morphism
  \[
    \phi^M \coloneqq \psi^{(\A^H,m^H)}_{d_2 - 1}: U \to W
  \]
  of free $A$-modules
  \begin{align*}
    U \coloneqq \left( \Der(S) \oplus \bigoplus_{X \in \A^H} S(-m^H(X)) \right)_{d_2 - 1}, \quad
    W \coloneqq \left( \bigoplus_{X \in \A^H} S \right)_{d_2 - 1} \mbox{.}
  \end{align*}
  Finally we define
  \[
    \operatorname{index} \phi^M \coloneqq \mathrm{rk}_A W - \mathrm{rk}_A U \mbox{.}
  \]
\end{defn}
We show in \Cref{sec:homogeneous_part} how to compute homogeneous parts of morphisms of free graded modules of finite rank.

\begin{theorem} \label{thm:main}
  In the notation of \Cref{defn:phi} the following are equivalent for $\p \in \Spec A$:
  \begin{enumerate}
    \item \label{thm:main.1}
      $\A_{(\p)}$ is not free.
    \item \label{thm:main.2}
      $D(\A_{(\p)}^{H_{(\p)}}, m^{H_{(\p)}})_{d_2 - 1}$ does not vanish.
    \item \label{thm:main.3}
      $\p$ is in the locus $V(\mathrm{Fitt}_\ell \phi^M)$ for $\ell = \operatorname{index} \phi^M$.
  \end{enumerate}
  In particular
  \[
    \operatorname{NFL}_\Slice(M) = V(\mathrm{Fitt}_\ell \phi^M) \mbox{.}
  \]
\end{theorem}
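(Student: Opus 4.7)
The plan is to establish the chain of equivalences \eqref{thm:main.1} $\Leftrightarrow$ \eqref{thm:main.2} $\Leftrightarrow$ \eqref{thm:main.3} at a fixed prime $\p \in \Spec A$, and then observe that the final identity $\operatorname{NFL}_\Slice(M) = V(\mathrm{Fitt}_\ell \phi^M)$ falls out immediately by ranging over all $\p \in \Spec A = \Slice$.

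For the equivalence \eqref{thm:main.1} $\Leftrightarrow$ \eqref{thm:main.2}, I would specialize the family to the residue field $\kappa(\p)$ and apply \Cref{coro:yoshinaga} to the individual arrangement $\A_{(\p)}$, using the hyperplane $H_{(\p)} \in \A_{(\p)}$ obtained by specializing the chosen $H \in \A$. Since $\A_{(\p)}$ is a rank $3$ arrangement with integrally splitting characteristic polynomial $(t-1)(t-d_2)(t-d_3)$ (this splitting is an invariant of the matroid and therefore uniform over $\Slice$), the corollary directly gives that $\A_{(\p)}$ fails to be free exactly when $D(\A_{(\p)}^{H_{(\p)}}, m^{H_{(\p)}})_{d_2-1} \neq 0$. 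The characteristic-freeness of Yoshinaga's criterion is precisely the content of the remark following \Cref{thm:yoshinaga}, so this works uniformly across the arithmetic base.

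For \eqref{thm:main.2} $\Leftrightarrow$ \eqref{thm:main.3}, the key point is to identify the degree $d_2-1$ strand of $D(\A_{(\p)}^{H_{(\p)}}, m^{H_{(\p)}})$ with the kernel of the specialized linear map $\phi^M_{\kappa(\p)}: U_{\kappa(\p)} \to W_{\kappa(\p)}$. By \Cref{coro:psi} applied to the multiarrangement $(\A^H, m^H)$ viewed over the ring $A$, the module $D(\A^H, m^H)$ is the kernel of $\psi^{(\A^H, m^H)}$ in the category of graded $A[x_1, \ldots, x_r]$-modules. Taking the degree $d_2 - 1$ part and specializing at $\p$, I need to argue that both operations commute with taking kernels in the relevant sense: extracting a single graded piece commutes with kernels trivially, and base change along $A \to \kappa(\p)$ commutes with kernels of morphisms between free modules up to the issue that the $\kappa(\p)$-dimension of the kernel can jump. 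This jumping is exactly what is captured by the Fitting ideal via \Cref{coro:Fitt}: taking $c = 0$ there, we get $V(\mathrm{Fitt}_{\operatorname{index} \phi^M} \phi^M) = \{\p \mid \dim_{\kappa(\p)} \ker \phi^M_{\kappa(\p)} > 0\}$, which is precisely the locus where $D(\A_{(\p)}^{H_{(\p)}}, m^{H_{(\p)}})_{d_2-1}$ fails to vanish.

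The main subtle point I expect to confront is the interchange of the degree $d_2-1$ strand with base change to $\kappa(\p)$: one must verify that the specialization of the degree $d_2-1$ component of $\ker \psi^{(\A^H, m^H)}$ over $A$ agrees with the degree $d_2-1$ component of $\ker \psi^{(\A_{(\p)}^{H_{(\p)}}, m^{H_{(\p)}})}$ over $\kappa(\p)$. Since the matrix entries of $\psi^{(\A^H, m^H)}$ in the chosen bases of the free $A$-modules $U$ and $W$ are polynomial in the coordinates on $\Slice$, and since both the source and target of $\phi^M$ are free $A$-modules of finite rank whose formation (as graded strands of free graded modules) commutes with arbitrary base change $A \to \kappa(\p)$, the kernel over $\kappa(\p)$ really is the kernel of the specialized matrix $\phi^M_{\kappa(\p)}$. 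With this in place, the three conditions chain together, and since $\operatorname{NFL}_\Slice(M) = \{\p \in \Slice \mid \A_{(\p)} \text{ is nonfree}\}$ by definition, the final equality $\operatorname{NFL}_\Slice(M) = V(\mathrm{Fitt}_\ell \phi^M)$ is immediate.
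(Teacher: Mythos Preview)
Your proposal is correct and follows essentially the same route as the paper: the equivalence \eqref{thm:main.1}$\Leftrightarrow$\eqref{thm:main.2} is \Cref{coro:yoshinaga}, and \eqref{thm:main.2}$\Leftrightarrow$\eqref{thm:main.3} is \Cref{coro:psi} combined with \Cref{coro:Fitt} at $c=0$. Your additional discussion of why taking the degree-$(d_2-1)$ strand and base change to $\kappa(\p)$ commute (because $U$ and $W$ are free $A$-modules) makes explicit a point the paper leaves implicit, but the argument is the same.
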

\begin{proof}
  \item[\eqref{thm:main.1}$\Leftrightarrow$\eqref{thm:main.2}:]
    This the statement of \Cref{coro:yoshinaga}.
  \item[\eqref{thm:main.2}$\Leftrightarrow$\eqref{thm:main.3}:]
    This is the combined statement of \Cref{coro:psi} and \Cref{coro:Fitt} for $c = 0$.
\end{proof}

\begin{coro}
  Let $M$ be a rank $3$ simple matroid with a representation slice $\Slice$.
  Then $\operatorname{NFL}_\Slice(M)$ is a \emph{closed} subvariety of $\Slice$.
  In particular, freeness is an open condition. 
\end{coro}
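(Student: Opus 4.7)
The plan is to reduce to \Cref{thm:main}, which already does all the work, and to handle the case where the characteristic polynomial fails to split integrally separately.

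First I would observe that the characteristic polynomial $\chi_\A(t)$ is an invariant of the intersection lattice of $\A$, which is in turn an invariant of the underlying matroid $M$. Hence the factorization type of $\chi_M(t)$ is the same for every $\A_{(\p)}$ with $\p \in \Slice$. There are thus two cases to consider.

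If $\chi_M(t)$ splits as $(t-1)(t-d_2)(t-d_3)$ with $d_2, d_3 \in \Z_{>0}$, then the hypotheses of \Cref{defn:phi} and \Cref{thm:main} are satisfied (possibly after covering $\Slice$ by finitely many affine opens and applying the theorem on each, since the representation slice was only guaranteed to be quasi-affine in \Cref{sec:computing_slice}). \Cref{thm:main} then identifies $\operatorname{NFL}_\Slice(M)$ with the vanishing locus $V(\mathrm{Fitt}_\ell \phi^M)$ of an ideal, which is by definition Zariski closed in $\Slice$.

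If $\chi_M(t)$ does not factor into linear factors over $\Z$, then Terao's factorization theorem forces every arrangement $\A_{(\p)}$ to be nonfree, hence $\operatorname{NFL}_\Slice(M) = \Slice$ is trivially closed. The "in particular" statement is then immediate, since the free locus is the open complement $\Slice \setminus \operatorname{NFL}_\Slice(M)$.

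The only mildly subtle point — and thus the main obstacle — is bookkeeping the quasi-affine structure: \Cref{thm:main} is phrased for an affine slice $\Slice \equiv \Spec A$, while the slices produced in \Cref{sec:computing_slice} are of the form $V(I) \setminus V(J)$. I would handle this by noting that the Fitting ideal construction sheafifies, so $V(\mathrm{Fitt}_\ell \phi^M)$ still defines a closed subscheme of the quasi-affine $\Slice$; alternatively one can cover $\Slice$ by the principal affine opens $D(g)$ for generators $g$ of $J$, apply \Cref{thm:main} on each $\Spec A[g^{-1}]$, and glue. No genuine calculation is needed beyond what \Cref{thm:main} already provides.
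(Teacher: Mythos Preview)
Your proposal is correct and follows the same route as the paper: the corollary is stated immediately after \Cref{thm:main} with no explicit proof, the understanding being that $\operatorname{NFL}_\Slice(M) = V(\mathrm{Fitt}_\ell \phi^M)$ is manifestly closed. You are in fact more thorough than the paper, which neither spells out the non-integrally-splitting case (handled, as you say, by Terao's factorization theorem) nor addresses the passage from the affine hypothesis in \Cref{defn:phi} to a general quasi-affine slice; both points are routine and your treatment of them is fine.
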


The last statement, restricted to the various fibers of $\Slice \to \Spec \Z$, implies Yuzvinsky's openness result  \cite{Yuz93} for the case of rank $3$ arrangements.

Before discussing the proof of~\Cref{thm:terao} we continue the discussion of the $M_{11}$ matroid underlying the pentagon arrangement.


\begin{exmp}\label{ex:pentagon}
	Since the characteristic polynomial of $M_{11}$ is $\chi(t)=(t-1)(t-5)^2$ the morphism $\phi^M$ is the degree $4$ part of the morphism given by matrix~\eqref{eq:mat11_matrix}.
	The morphism $\phi^M$ is defined by a $25 \times 25$ matrix over $R = \Z[a]/(a^2-a-1)$.
	
	As the index of this morphism is zero, the nonfree locus $\operatorname{NFL}_{\Slice_{M_{11}}}(M_{11})$ is the vanishing locus of the determinant of this square matrix within the representation slice $\Slice_{M_{11}}=V(a^2-a-1)$ of $M_{11}$.
	Using the heuristics described in~\Cref{sec:ByASmallerPresentation} we can replace this matrix by a $1\times 1$ matrix $\phi^M_\mathrm{red} = \begin{pmatrix} 4 \end{pmatrix}\in R^{1 \times 1}$ with $\coker (\phi^M_\mathrm{red}) \cong \coker \phi^M$.
	Hence
	\begin{align*}
		\operatorname{NFL}_{\Slice_{M_{11}}}(M_{11}) &= V(\operatorname{Fitt}_0( \phi^M )) = V(\operatorname{Fitt}_0( \phi^M_\mathrm{red} )) = V\left(\sqrt{\operatorname{Fitt}_0( \phi^M_\mathrm{red})}\right) \\
		&= V(2,a^2-a-1) \subsetneq V(a^2-a-1) = \Slice_{M_{11}} \subset  \Spec \Z[a] \mbox{.}
	\end{align*}
	Therefore, a representation of $M_{11}$ is free if and only if the underlying field is not of characteristic $2$.
	
	There are in fact nonfree representations of $M_{11}$ over $\mathbb{F}_4$.
	A posteriori, this can also be theoretically explained through another freeness criterion of Yoshinaga which applies to arrangements over finite fields~\cite{Yos06}.
	Hence, the ``pentagon'' matroid $M_{11}$ is another example witnessing the dependence of freeness on the underlying field.
\end{exmp}

\begin{proof}[Proof of \Cref{thm:terao}]
	As explained in~\Cref{sec:matroids}, verifying Terao's freeness conjecture for the $9$ exceptional matroids $M_9,M_{11},M_{12}^1,M_{12}^2,M_{13}^1,\ldots,M_{13}^5$ completes the proof.
	
	Analogous to \Cref{ex:pentagon} we computed the free locus of each of these matroids using the techniques of~\Cref{sec:Fitting} and report the results in~\Cref{tbl:matroids}.
	Over a fixed characteristic, the representations of a given matroid turn out to be either all free or all nonfree.
	Notice, however that the freeness depends on the characteristic for representations of $M_9$ and $M_{11}$; all representations are free except in characteristic $3$ and $2$, respectively.
\end{proof}

{\tiny
	\begin{figure}[H]
		\addtolength{\tabcolsep}{0.9pt}
		\def\arraystretch{1.5}{
		\begin{tabular}{lrrrrrrr}
			 $M$& $|M|$ & roots  & $|\mathrm{Aut}_M|$&  the representation slice $\Slice_M$ within $\Spec\Z\left[a \right]$ & $\phi^M$ & $\phi^M_\mathrm{red}$ & $\operatorname{NFL}_{\Sigma_M}(M) \subseteq \Sigma_M$ \\
			\hline
			\hline
			\rowcolor{LightGray}
			$M_9$ & $9$ & $(4,4)$& $432$ & $V(a^2-a+1) \cong \Spec \Z[\zeta_6] = \Spec \Z[\zeta_3]$ & $16 \times 16$ & $1 \times 1$ & $V(3)$ \\
			\hline
			$M_{11}$ &$11$ & $(5,5)$ & $20$ & $V(a^2-a-1) \cong \Spec \Z[\omega],\, \omega = \frac{1+\sqrt{5}}{2}$ & $25 \times 25$ & $1 \times 1$ & $V(2)$\\
						\hline
			\rowcolor{LightGray}
			$M^1_{12}$ & $12$ & $(5,6)$& $192$ & $V(a^2-2a+2)\setminus V(2) \setminus V(a-1)$ & $24 \times 25$ & $8 \times 4$ & $\emptyset$ \\
			\hline
			$M^2_{12}$ &$12$ & $(5,6)$ & $576$ &  $V(2)\setminus V(a) \setminus V(a+1)$ & $24 \times 25$ & $4 \times 4$ & $\Slice_{M_{12}^2} $\\
			\hline
			\rowcolor{LightGray}
			$M^1_{13}$ & $13$ & $(6,6)$& $18$ & $(\star)$ & $36 \times 36$ & $9 \times 2$ & $\emptyset$ \\
			\hline
			$M^2_{13}$ &$13$ & $(6,6)$ & $16$ &  $V(a^2+1)\setminus V(2)$ & $36 \times 36$ & $8 \times 5$ & $\Slice_{M_{13}^2} $\\
			\hline
			\rowcolor{LightGray}
			$M^3_{13}$ & $13$ & $(6,6)$& $8$ & $V(a^2-a-1) \setminus V(2)\setminus V(a)$ & $36 \times 36$ & $2 \times 2$ & $\Slice_{M_{13}^3} $ \\
			\hline
			$M^4_{13}$ &$13$ & $(6,6)$ & $2$ &  $V(2a^2-2a+1)\setminus V(2a-3)\setminus V(3a-1)$ & $36 \times 36$ & $2 \times 1$ & $\emptyset$ \\
			\hline
			\rowcolor{LightGray}
			$M^5_{13}$ & $13$ & $(6,6)$& $2$ & $V(2a^2+2a+1) \setminus V(2a+3)\setminus V( a+2 )$ & $36 \times 36$ & $2 \times 1$ & $\emptyset$
		\end{tabular}}
		\captionof{table}{\rule{0em}{2em}
			The $9$ exceptional matroids together with the two nontrivial roots of its characteristic polynomial.
			We also describe the size of its automorphism group, the representation slice $\Slice_M$, and the nonfree locus $\operatorname{NFL}_{\Sigma_M}(M)$ within $\Slice_M$
			\label{tbl:matroids}}
		\addtolength{\tabcolsep}{-4pt}
	\end{figure}
}

The representation slice $(\star)$ of $M_{13}^1$ is
\[
	\Sigma_{M_{13}^1} = V(a_1a_2^2-2a_1a_2+a_1-a_2)\setminus V(6a_2^3+4a_1^2-10a_1a_2-a_2^2-a_1-9a_2) \subseteq \Spec \Z[a_1,a_2] \mbox{.}
\]

\section{Further examples with more than $14$ lines} \label{sec:conj}

The two exceptional matroids $M_9$ and $M_{12}^1$ underlie the arrangements corresponding to the complex reflections groups $G(3,3,3)$ and $G(4,4,3)$, respectively.
It is known that the reflection arrangements over characteristic $0$ stemming from $G(n,n,3)$ are free for all $n\ge 1$.
We determined the obstruction variety of their underlying matroids for $n<10$ and found the following:
If $n$ is a prime number, a representation is free if and only if the characteristic of the field is not $n$.
If $n$ is not a prime number, the obstruction variety was empty in all considered cases.
Therefore, we pose the following conjecture:

\begin{conj}
	An arrangement  $\A$ over a field $k$ which is combinatorially equivalent to the reflection arrangement $G(n,n,3)$ for some $n\ge 3$ is free if and only if $n$ is not a prime number or if the characteristic of $k$ is not $n$ in the case that $n$ is a prime number.
\end{conj}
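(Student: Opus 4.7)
The plan is to apply \Cref{thm:main} uniformly to the matroids $M_n$ underlying the $G(n,n,3)$ reflection arrangements. Each $M_n$ has integrally splitting characteristic polynomial $\chi(t)=(t-1)(t-(n+1))(t-(2n-2))$ (since $G(n,n,3)$ is free over $\CC$ with exponents $(1, n+1, 2n-2)$), so with $d_2=n+1$, \Cref{coro:yoshinaga} reduces the freeness question at a point $\p$ of the representation slice $\Slice_{M_n}$ to the vanishing of $D((\A^H, m^H))_n$. First I would compute $\Slice_{M_n}$ following \Cref{sec:computing_slice}; the cases $M_9$ and $M_{12}^1$, together with the authors' verification for $n<10$, suggest a zero-dimensional slice cut out by a single cyclotomic-flavored polynomial $f_n(a)\in\Z[a]$.

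Next, picking a reflection hyperplane $H$ in the standard $\Z[\zeta_n]$-representation, a direct coordinate computation in the spirit of \Cref{ex:pentagon_ziegler} shows that the Ziegler restriction $(\A^H, m^H)$ has defining polynomial
\[
Q_n(u,v) \;=\; u^{n-1}(u^n - v^n)^2
\]
in suitable coordinates: one line of multiplicity $n-1$ contributed by the other $n-1$ hyperplanes in the family of $H$, and $n$ lines of multiplicity $2$ arising from the pairwise coincidences between the two remaining Ziegler families on $H$.

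The technical core is the explicit analysis of $D(Q_n)_n$. Writing a candidate $\theta = f\,\partial_u + g\,\partial_v$ of degree $n$, the condition on $u^{n-1}$ forces $f = u^{n-1}(au+bv)$, and the remaining constraints $f - \zeta^i g \in ((u-\zeta^i v)^2)$ for $i = 0, \ldots, n-1$ translate, via discrete Fourier analysis on $\Z/n\Z$, into a Vandermonde-type linear system on the coefficients of $g$. When $p \nmid n$ the $\zeta^i$ are distinct and the system is nondegenerate, forcing $a = b = 0$ and hence $D(Q_n)_n = 0$; combined with the explicit degree-$(n+1)$ element $n u^{n+1}\,\partial_u + v((n+1)u^n - v^n)\,\partial_v$ (verified directly by Taylor expanding at $u = \zeta^i v$), this pins the Ziegler exponents to $(n+1, 2n-2)$, so \Cref{thm:yoshinaga} yields freeness. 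When $p = n$ is prime, the DFT degenerates and an explicit nonzero element of $D(Q_n)_n$ appears, establishing nonfreeness precisely on $V(n) \cap \Slice_{M_n}$.

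The main obstacle is composite $n$ with a prime $p \mid n$: here the $n$-th roots of unity collapse and $Q_n$ is no longer the correct model for the Ziegler restriction of any representation in $\Slice_{M_n} \cap V(p)$. My plan is to argue that, for composite $n$, either the slice is excised at $V(p)$ (the phenomenon that removes characteristic $2$ from $\Slice_{M_{12}^1}$), or the residual Ziegler multiarrangement, although combinatorially different, still admits an explicit degree-$(n+1)$ derivation constructible from the reduction of $f_n(a)$ modulo $p$. Establishing this residual case uniformly --- equivalently, that $\operatorname{Fitt}_\ell \phi^{M_n}$ is the unit ideal modulo each $p \mid n$ when $n$ is composite --- is the main technical difficulty, and I would approach it by combining a characteristic-$p$ analysis of the factorization of $f_n(a)$ with the simplification heuristics of \Cref{sec:ByASmallerPresentation} applied to $\phi^{M_n}$.
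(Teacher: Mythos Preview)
The statement you are attempting to prove is a \emph{conjecture} in the paper, not a theorem: the authors offer no proof, only computational verification for $n<10$ via the Fitting-ideal machinery of \Cref{thm:main}. So there is no ``paper's proof'' to compare against; your proposal is an attempt to settle an open problem.

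As such, the proposal contains genuine gaps. The most serious is one you flag yourself: for composite $n$ with a prime $p\mid n$, you have no argument, only a plan to ``combine a characteristic-$p$ analysis of the factorization of $f_n(a)$ with the simplification heuristics.'' That is not a proof, and the heuristics of \Cref{sec:ByASmallerPresentation} are computational tools for a fixed $n$, not uniform theoretical devices. A second, unacknowledged gap is your treatment of the representation slice. Your entire Ziegler-restriction analysis is carried out on the \emph{standard} representation over $\Z[\zeta_n]$, where $Q_n=u^{n-1}(u^n-v^n)^2$ is valid; but the conjecture concerns \emph{every} arrangement combinatorially equivalent to $G(n,n,3)$, i.e., every point of $\Slice_{M_n}$. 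You assert, based on the $n=3,4$ cases, that $\Slice_{M_n}$ is zero-dimensional and ``cyclotomic-flavored,'' which would make every representation a Galois conjugate of the standard one --- but you do not prove this, and establishing the rigidity of the matroid $M_n$ for general $n$ is itself a nontrivial statement. Without it, even your clean Vandermonde/DFT argument in characteristic coprime to $n$ does not cover all arrangements in the moduli space. Finally, that DFT argument is only sketched: the double-vanishing conditions at each $u=\zeta^i v$ give $2n$ linear constraints on the $n{+}1{+}2$ unknowns $(a,b,c_0,\ldots,c_n)$, and showing the resulting system has trivial kernel requires more than invoking Vandermonde nondegeneracy.
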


\appendix

\section{Embedding of affine sets in smaller affine spaces} \label{sec:smaller_embedding}

The goal of this section is to list some simple heuristics we use to embed the representation slice $\Slice \subseteq \mathcal{R}(M)$ into a smaller affine space.
Since the representation space $\mathcal{R}(M)$ has an affine representation given by $\mathcal{R}(M) \subseteq \AA^{rn + 1}$ in \eqref{eq:affine} and a quasi-affine representation $\mathcal{R}(M) \subseteq \AA^{rn}$ given by \eqref{eq:quasi-affine}, the same holds true for the closed subset $\Slice \subseteq \mathcal{R}(M)$.
Recall that in \eqref{eq:Sigma} we have described
\begin{align*}
   \Slice = V(I) \setminus V(J) \mbox{,}
\end{align*}
with $I, J \unlhd A$.

What we describe now works for any ideal $I$ in a polynomial ring $A = \Z[x_1, \ldots, x_t]$.\footnote{$A=\Z[p_{ij} \mid i = 1, \ldots, r, j = 1, \ldots, n]$ in \Cref{sec:RM}.}
Our goal is to find a subset $\{ z_1, \ldots, z_u \} \subseteq \{ x_1, \ldots, x_t \}$ and an ideal $I_\infty \unlhd A_\infty = \Z[z_1, \ldots, z_u]$, such that the natural morphism $A_\infty \to A / I$ yields an isomorphism $A_\infty / I_\infty \cong A / I$, i.e., to replace the natural projection $\pi: A \twoheadrightarrow A / I$ by an equivalent surjective morphism $\pi_\infty: A \twoheadrightarrow A_\infty / I_\infty$
\begin{center}
\begin{tikzpicture}
  \node (A) {$A$};
  \node (I) at (3,0.75) {$A/I$};
  \node (I') at (3,-0.75) {$A_\infty / I_\infty$};
  
  \draw[-doublestealth] (A) -- node[above]{$\pi$} (I);
  \draw[-doublestealth] (A) -- node[below]{$\pi_\infty$} (I');
  \draw[-stealth'] (I) -- node[right]{\rotatebox{90}{$\sim$}} (I');
  
\end{tikzpicture}
\end{center}
which describes an embedding
\[
  \AA^u \supseteq V(I_\infty) \cong V(I) \hookrightarrow \AA^t \mbox{.}
\]
We achieve this by computing successive isomorphisms $A/I \cong A_1 / I_1 \cong \ldots \cong A_\infty / I_\infty$.
\begin{enumerate}
  \item Let $G$ be a Gröbner basis of $I$.
    We call an indeterminate $x_i$ \textbf{standard} if $\operatorname{NF}_{G}(x_i) = x_i$, where $\operatorname{NF}_{G}(f)$ denotes the normal form of the polynomial $f \in A$ with respect to $G$.
    For all other indeterminates the normal form will only contain terms in the polynomial subring $A_1 = \Z[y_1, \ldots, y_s]$ generated by the standard indeterminates $\{y_1, \ldots, y_s\} \subseteq \{x_1, \ldots, x_t\}$.
    Let $I_1$ denote the kernel of the surjective ring morphism $A_1 \to A/I, y_i \mapsto y_i + I$, the composition of the embedding $A_1 \hookrightarrow A$ and the natural projection $\pi: A \twoheadrightarrow A / I$.
    A Gröbner basis $G_1$ of $I_1$ can be computed using an elimination order\footnote{Or in fact any global monomial order \cite{BL_Elimination}.}.
    Composing the ring morphism
    \[
      \widetilde{\pi}_1: A \to A_1, x_i \mapsto \operatorname{NF}_G(x_i)
    \]
    with the natural surjection $A_1 \twoheadrightarrow A_1 / I$ yields the surjective morphism
    \[
      \pi_1: A \twoheadrightarrow A_1 / I_1, x_i \mapsto \operatorname{NF}_G(x_i) + I
    \]
    which describes the embedding $\AA^s \supseteq V(I_1) \cong V(I) \hookrightarrow \AA^t$.
  \item Inspect all elements of the Gröbner basis $G_1$ of $I_1$ for elements of the form $g = u y_i - f(y_1, \ldots, y_{i-1}, y_{i+1}, \ldots, y_s)$ for some $1 \leq i \leq s$ and $u$ is a unit in $A_1$ (here $u = \pm 1$).
  The ring morphism
  \[
    \widetilde{\psi}_2:
    \begin{cases}
      A_1 &\to A_2 = \Z[y_1, \ldots, y_{i-1}, y_{i+1}, \ldots, y_s], \\
      y_j &\mapsto y_j \quad j \neq i, \\
      y_i &\mapsto f / u \mbox{.}
    \end{cases}
  \]
  yields the isomorphism $A_1 / I_1 \xrightarrow{\sim} A_2 / I_2$ with $I_2 \coloneqq \widetilde{\psi}_2(I_1)$.
  Composing $\widetilde{\pi}_2 \coloneqq \widetilde{\psi}_2 \circ \widetilde{\pi}_1: A \to A_2$ with the natural surjection $A_2 \twoheadrightarrow A_2 / I_2$ we get the surjective morphism $\pi_2: A \twoheadrightarrow A_2 / I_2$ which describes the embedding $\AA^{s-1} \supseteq V(I_2) \cong V(I) \hookrightarrow \AA^t$.
  \item Iterate the last step until no element of the Gröbner basis of the ideal $I_i$ has the above mentioned form. Then set $A_\infty \coloneqq A_i$, $I_\infty \coloneqq I_i$ and $\pi_\infty: A \twoheadrightarrow A_\infty / I_\infty$.
\end{enumerate}
For any other ideal $J \unlhd A$ we can define $J_\infty \coloneqq \widetilde{\pi}_\infty(J)$ and obtain the embedding
\[
  \AA^u \supseteq V(I_\infty) \setminus V(J_\infty) \cong V(I) \setminus V(J) \hookrightarrow \AA^t \mbox{.}
\]

These heuristics are implemented in the package $\mathtt{MatricesForHomalg}$ \cite{homalg-project} and interpreted geometrically in $\mathtt{ZariskiFrames}$ \cite{ZariskiFrames}.

\section{Computing the degree $d$ part of $\phi$} \label{sec:homogeneous_part}

Here we show how to compute homogeneous parts of morphisms of free graded modules of finite rank which we use to compute the degree-$(d_2-1)$-part $\phi^M \coloneqq \left(\psi^{(\A^H,m^H)}\right)_{d_2 - 1}$ of the morphism $\psi^{(\A^H,m^H)}$ considered in \Cref{sec:proof}.

Let $A$ be a commutative ring and $S = A[x_1, \ldots, x_r]$ the free polynomial $A$-algebra, equipped with the standard grading $\deg(x_i) = 1$ and $\deg(a) = 0$ for all $a \in A \setminus \{0\}$.
Consider the category $A^\oplus$ of free $A$-modules of finite rank and the ($A^\oplus$-enriched) category $S^\oplus$ of free \emph{graded} modules of finite rank consisting of the objects
\[
  M = \bigoplus_{i=1}^h S(-m_i) \qquad \mbox{ for } h \in \N, \forall_{i=1}^h m_i \in \Z \mbox{.}
\]
For $d \in \Z$ the degree-$d$-part is the additive functor
\[
  (-)_d \coloneqq \Hom_{S^\oplus}(S(-d), -): S^\oplus \to A^\oplus.
\]
The value of the functor on an object $M = \bigoplus_{i=1}^h S(-m_i) \in S^\oplus$ is given by
\begin{align*}
  M_d
  &=
  \Hom_{S^\oplus}(S(-d), M)
  =
  \Hom_{S^\oplus}\left(S(-d), \bigoplus_{i=1}^h S(-m_i) \right) \\
  &=
  \bigoplus_{i=1}^h \Hom_{S^\oplus}(S(-d), S(-m_i)) = \bigoplus_{i=1}^h S(-m_i)_d = \bigoplus_{i=1}^h S_{d - m_i} \mbox{,}
\end{align*}
where $S_a$ is the degree-$a$-part of the polynomial ring $S$, which is a free $A$-module of rank $\binom{r-1+a}{r-1}$.

The value of the functor on a morphism $\psi: M \to N$ in $S^\oplus$ can be constructed as follows.
For $a \in \Z$ define the column matrix $\iota_a \in S^{\binom{r-1+a}{r-1} \times 1}$ consisting of all degree-$a$ monomials in $S$.
The embedding of the $S$-submodule $\langle M_d \rangle \leq M = \bigoplus_{i=1}^h S(-m_i)$ generated by the degree-$d$-part $M_d$ is given by the block-diagonal matrix
\[
  \iota_{M,d}: \mathrm{diag}\left(\iota_{d-m_1}, \ldots, \iota_{d-m_h}\right): \langle M_d \rangle \hookrightarrow M \mbox{,}
\]
interpreted as a morphism in $S^\oplus$.
The degree-$d$-part of $\psi$ can now be computed by computing the lift $\langle \psi_d \rangle$
\begin{center}
  \begin{tikzpicture}
    \node(Md) {$\langle M_d \rangle$};
    \node(M) at ($(Md)+(2,0)$) {$M$};
    \node(Nd) at ($(Md)+(0,-2)$) {$\langle N_d \rangle$};
    \node(N) at ($(Nd)+(2,0)$) {$N$};
    
    \draw[right hook-stealth'] (Md) -- node[above]{$\iota_{M,d}$} (M);
    \draw[right hook-stealth'] (Nd) -- node[above]{$\iota_{N,d}$} (N);
    \draw[-stealth'] (M) -- node[right]{$\psi$} (N);
    \draw[dotted,-stealth'] (Md) -- node[left]{$\langle \psi_d \rangle$} (Nd);
  \end{tikzpicture}
\end{center}
as a matrix over $S$ (using Gröbner bases over $S$).
Due to degree reasons the matrix $\langle \psi_d \rangle$ is in fact a matrix $\psi_d$ over $A$, which defines the desired morphism $\psi_d: M_d \to N_d$.
Note that the lift is unique as a lift (of the composition $\psi \circ \iota_{M,d}$) along the monomorphism $\iota_{N,d}$.

The above functor is implemented in the $\mathsf{GAP}$-packages $\mathtt{GradedModules}$ \cite{GradedModules} and $\mathtt{IntrinsicGradedModules}$ \cite{IntrinsicGradedModules}. The former is based on $\mathtt{homalg}$ \cite{BL} and the latter is a reimplementation based on $\textsc{Cap}$ \cite{GPSSyntax,PosCCT+arXiv}.

\section{Finding a smaller presentation of a finitely presented module over an affine ring and computing Fitting ideals} \label{sec:ByASmallerPresentation}

Let $A$ be a commutative nonzero unital ring and $\phi: U \to W$ a morphism of free $A$-modules of finite rank.
After choosing free bases of $U$ and $W$ one can identify the morphism $\phi$ with a matrix\footnote{We use the row-convention.} $\phi \in A^{\mathrm{rk}_A U \times \mathrm{rk}_A W}$.
Using \Cref{rmrk:Fitting} we now describe several heuristics which start with the matrix $\phi$ and try to produce a matrix $\phi'$ of smaller dimensions, and where the Fitting ideals of $\phi$ can be computed in terms of the Fitting ideals of $\phi'$.
\begin{enumerate}
  \item \label{heuristics:a}
    The rows of $\phi$ describe a generating set of relations among the generators of the $A$-module $\coker \phi$.
    Hence, the matrix $\phi$ can be replaced with any other $? \times \mathrm{rk}_A W$ matrix $\phi'$ having the same row span (= the $A$-submodule of relations in the free $A$-module $A^{1 \times \mathrm{rk}_A W}$).
    In particular, one can remove from $\phi$ zero rows, duplicate rows, etc.
    More generally, if $A$ is a ring with a Gröbner basis notion as in our application, then one can replace $\phi$ by a reduced Gröbner basis of the rows, at least if this results in a smaller generating system of the row span.
  \item Let $\rho \coloneqq \operatorname{row-syz}(\phi)$ be a matrix of syzygies among the rows of $\phi$.
    If any row-syzygy (i.e., row of $\rho$) has a unit at the $i$-th column, then the $i$-th row of $\phi$ is a linear combination of the remaining rows and can be deleted as in \eqref{heuristics:a}.
  \item A unit $\phi_{i,j} \in A^\times$ means that the $i$-th row expresses the $j$-th generator of $\coker \phi$ as an $A$-linear combination of the remaining generators.
    In this case define $\phi'$ as follows:
    Turn the $i$-th row into the $j$-th unit vector by dividing the $j$-th column by~$\phi_{i,j}$ and then use the resulting $1$ to clean up the rest of the $i$-th row.
    This corresponds to multiplying $\phi$ with the invertible matrix $\chi$ defined as the $\mathrm{rk}_A W \times \mathrm{rk}_A W$ identity matrix with the $j$-th row replaced by
    \[
      \begin{pmatrix}
        -\frac{\phi_{i,1}}{\phi_{i,j}} & \cdots & -\frac{\phi_{i,j-1}}{\phi_{i,j}} & \frac{1}{\phi_{i,j}} & -\frac{\phi_{i,j+1}}{\phi_{i,j}} & \cdots -\frac{\phi_{i,\mathrm{rk}_A W}}{\phi_{i,j}}
      \end{pmatrix} \mbox{.}
    \]
    The $i$-th row of the resulting matrix $\phi \chi$ is indeed the $j$-th unit vector in $A^{1 \times \mathrm{rk}_A W}$.
    Multiplying with $\chi$ can be interpreted as a change of the generating system, where the $j$-th unit vector (now occurring as the $i$-th relation) states that in the new generating system of $\coker \phi \chi \cong \coker \phi$ the $j$-th generator is zero.
    This means that the $j$-th column of $\phi \chi$ (containing the coefficients of this new zero generator) can be deleted.
    The $i$-th row of the resulting matrix is zero and can also be deleted as in \eqref{heuristics:a}.
    Hence, the matrix $\phi'$ defined by deleting the $j$-th column and the $i$-th row in $\phi \chi$ obviously satisfies $\coker \phi' \cong \coker \phi$.
  \item \label{heuristics:b}
    If $\phi$ contains a zero column, then $\coker \phi$ admits a free direct summand $A$.
    More precisely, if the $j$-th column of $\phi$ is zero then $\coker \phi \cong \coker \phi' \oplus A$, where $\phi'$ results from $\phi$ by deleting the zero $j$-th column.
    It follows from \Cref{defn:Fitting} that
    \[
      \operatorname{Fitt}_\ell \phi = \operatorname{Fitt}_{\ell-1} \phi' \mbox{.}
    \]
  \item Let $\Slice \coloneqq \operatorname{col-syz}(\phi)$ be a matrix of syzygies among the columns of $\phi$.
    If any column-syzygy (i.e., column of $\Slice$) has a unit at the $j$-th entry then the $j$-th column of $\phi$ is an $A$-linear combination of the remaining columns.
    This syzygy can be used to replace the $j$-th generator of $\coker \phi$ by a free generator of a  free direct summand $A$, i.e., a new generator for which in the transformed relation matrix $\widetilde{\phi}$ the $j$-th column is zero.
    Then
    \[
      \operatorname{Fitt}_\ell(\phi) = \operatorname{Fitt}_\ell(\widetilde{\phi}) \stackrel{\eqref{heuristics:b}}{=} \operatorname{Fitt}_{\ell-1}(\phi') \mbox{,}
    \]
    where $\phi'$ is the matrix $\phi$ (or $\widetilde{\phi}$) with the $j$-th column deleted.
\end{enumerate}

\section{The $9$ exceptional matroids in the database} \label{sec:database}

	\begin{figure}[H]
	\addtolength{\tabcolsep}{0.9pt}
	\def\arraystretch{1.5}{
		\begin{tabular}{lr}
			$M$ & the key of $M$ in the database\\
			\hline
			\hline
			\rowcolor{LightGray}
			$M_9$ &  \texttt{d8ffc0e083ea534e34556086ac2416ca48cc0483}\\
			\hline
			$M_{11}$ & \texttt{ba024dadc693ecf1dcfffcb1080b17412d3ef456}\\
			\hline
			\rowcolor{LightGray}
			$M^1_{12}$ &  \texttt{ba1ed7e5f970cd1f1f61c6f8da9819b11518a676}\\
			\hline
			$M^2_{12}$ & \texttt{0df50b7b1d5adf05683022f4b5dac3deff13df93}\\
			\hline
			\rowcolor{LightGray}
			$M^1_{13}$ & \texttt{118cb9babc77e406eb53043ac399bf851a012830}\\
			\hline
			$M^2_{13}$ &  \texttt{f6326c481d3f3cdf408d7e1a57beac744611e5b4}\\
			\hline
			\rowcolor{LightGray}
			$M^3_{13}$ &  \texttt{cc857179fb73ff8064a5aa3bc4e225df535310d8}\\
			\hline
			$M^4_{13}$ &  \texttt{8b85726983dbde6bcc794225ce124b1e35271399}\\
			\hline
			\rowcolor{LightGray}
			$M^5_{13}$ & \texttt{3582712ca004ed51fa91726c695b5a991af7c1b5}
	\end{tabular}}
	\captionof{table}{\rule{0em}{2em}
		The hashed keys of the $9$ exceptional matroids in the database~\cite{matroids_split}.
		\label{tbl:database}}
	\addtolength{\tabcolsep}{-4pt}
\end{figure}

\bibliographystyle{myalpha}
\newcommand{\etalchar}[1]{$^{#1}$}
\def\cprime{$'$} \def\cprime{$'$} \def\cprime{$'$} \def\cprime{$'$}
  \def\cprime{$'$}
\providecommand{\bysame}{\leavevmode\hbox to3em{\hrulefill}\thinspace}
\providecommand{\MR}{\relax\ifhmode\unskip\space\fi MR }
\providecommand{\MRhref}[2]{%
  \href{http://www.ams.org/mathscinet-getitem?mr=#1}{#2}
}
\providecommand{\href}[2]{#2}

\end{document}
